\documentclass[11pt,a4paper,reqno]{amsart}
\usepackage[latin1]{inputenc}
\usepackage[english]{babel}
\usepackage{amsmath}
\usepackage{amsfonts}
\usepackage{amssymb}
\usepackage{mathrsfs}
\usepackage{latexsym}

\usepackage[margin=2cm]{geometry}
\usepackage{xcolor}
\usepackage{color}
\usepackage{graphicx,color}

\usepackage{fancyhdr}
\usepackage[numbers]{natbib}
\usepackage{tikz}
\usetikzlibrary{calc, intersections}

\usepackage[plainpages=false,colorlinks,hyperindex,bookmarksopen,linkcolor=black,citecolor=black,urlcolor=black]{hyperref}
\newcommand*\bigcdot{\mathpalette\bigcdot@{.3}}
\theoremstyle{definition}

\theoremstyle{definition}
\newtheorem{tm}{Theorem}

\theoremstyle{definition}

\newtheorem{lemma}{Lemma}

\newtheorem{remark}{Remark}

\begin{document}
	
	\title{Elastic Brownian motion with random jumps from the boundary}
	
	\author[Colantoni]{Fausto Colantoni}
	\address{Department of Basic and Applied Sciences for Engineering, Sapienza University of Rome, Rome, Italy}
	\email{fausto.colantoni@uniroma1.it}

	\author[D'Ovidio]{Mirko D'Ovidio}
	\address{Department of Statistical Sciences, Sapienza University of Rome, Rome, Italy}
	\email{mirko.dovidio@uniroma1.it}
	\date{}
	\keywords{Elastic Brownian motion, nonlocal Robin boundary conditions, invariant measure, spectral analysis}
	\subjclass[2020]{Primary 60J65, 60J75; Secondary 60J60, 35P05, 35J25}
	\maketitle
	\begin{abstract}
		In this paper, we study elastic Brownian motion on a \(C^2\) domain. Instead of being killed at the boundary, the process restarts from a random position inside the domain. We characterize this process through its stochastic differential equation (SDE), its generator, and a description of the paths. We also derive the invariant probability measure and the spectral representation. At the end, we focus on the harmonic functions on the upper half-space to study the trace process.
	\end{abstract}
	\section{Introduction}
	
	In the field of probability theory, diffusions with boundary jumps are gaining increasing importance. These processes were first introduced by Feller~\cite{feller1952parabolic}, who later studied the corresponding semigroups with these boundary conditions~\cite{feller1954diffusion}. The model of Brownian motion killed at the boundary (with Dirichlet conditions) which, instead of being absorbed, is restarted from a fixed position inside the domain $D$, was successively investigated in both one and higher dimensions in~\cite{grigorescu2002brownian} and~\cite{grigorescu2007ergodic}.
	
	The nonlocal Dirichlet condition, given by the formula 
	\begin{align}
		\label{nl-dirichlet}
		u(z)= \int_D u(x) \mu(z, dx), \quad \text{for } z \in \partial D,
	\end{align}
	for a suitable domain \(D\), was later generalized to include probability measures \(\mu\) that depend on the hitting point of the boundary. Diffusions with this boundary condition have been extensively studied, with a focus on their ergodic behavior and spectrum~\cite{ben2007spectral, ben2009ergodic}, as well as a complete treatment of the semigroup in both bounded and unbounded domains~\cite{arendt2016diffusion, kunze2020diffusion}.
	
	A diffusion satisfying the boundary condition~\eqref{nl-dirichlet} can be represented by paths obtained through the concatenation of diffusions that, instead of being killed at the boundary, are restarted inside the domain at points distributed according to~$\mu$. This type of concatenation of stochastic processes with jumps at the boundary is also well known in probability; see, for instance,~\cite[Section 25]{gihman1972stochastic}. In the case of Brownian motion, the associated process is not a Hunt process, since it cannot be realized with c\`adl\`ag paths. Similar boundary conditions have also been studied for stable processes, and in this case the associated stochastic process turns out to be a Hunt process~\cite{bogdan2024fractional, bogdan2025stable}.
	
	The first probabilistic solution to the problem introduced by Feller in \cite{feller1952parabolic} was given by It\^o and McKean \cite{ito1963brownian}, where the measure appearing in the integral boundary condition is a L\'evy measure. More recently, their approach has been revisited in the study of non-local boundary value problems for Brownian motion~\cite{bonaccorsi2022non, colantoni2025non}. In these works, the integral condition is rewritten as a Marchaud-type operator, and techniques from fractional calculus are used to analyze the problem.
	
	Such models also have important physical applications. For instance, in~\cite{colantoni2025time} it is shown that, on the positive half-line, a Brownian motion with negative drift and a boundary condition combining reflection and jumps is the time-reversed process of a Brownian motion with Poissonian resetting to the origin~\cite{evans2020stochastic}. This result highlights the versatility of such models, which naturally connect to search problems and stochastic thermodynamics.
	
	In this paper, we focus on elastic Brownian motion, where at the killing times the process has jumps inside the domain and restarts its paths instead of being killed. The corresponding boundary condition has been considered in~\cite{arendt2018diffusion}, where sufficient conditions for the semigroup to be Feller are established. In our setting, we study the simpler case where the boundary integral measure does not depend on the hitting point of the boundary, which nevertheless allows for a complete analysis from the SDE viewpoint.
	
	\section{Stochastic Differential Equation}
	\label{sec:setting}
	
	We consider the SDE in a bounded $C^2$ domain $D\subset\mathbb{R}^n$:
	\begin{equation}\label{eq:SDE}
		\mathrm{d}X_t = \mathrm{d}B_t + n(X_t)\,\mathrm{d}L_t + \int_D\bigl(x - X_{t-}\bigr)\,\nu\bigl(\mathrm{d}L_t,\mathrm{d}x\bigr),
	\end{equation}
	where $B$ is a standard Brownian motion on \(\mathbb{R}^n\), $L$ is the boundary local time of $X$ on $\partial D$, $n(\cdot)$ is the inward unit normal on $\partial D$, and $\nu$ is a Poisson random measure, independent of \(B\), on $\mathbb{R}_+\times D$ with intensity $\mathrm{d}\ell \otimes \mu(\mathrm{d}x)$, where \(\mu\) is a probability measure.  The last term induces jumps into $D$ at boundary hits, distributed according to the measure $\mu$. Here, the filtration generated by $(B,\nu)$ is the smallest filtration 
	$(\mathcal{F}_t)_{t\ge 0}$ satisfying the usual conditions such that
	$B_s$ and $\nu\big((0,s]\times A\big)$ are $\mathcal{F}_t$-measurable for 
	all $s\le t$ and all measurable $A\subset \mathcal{B}(D)$, the Borel \(\sigma-\)algebra of \(D\).

	Let $\{(\ell_k,Z_k)\}_{k \geq 1}$ be the countable family, respectively, of the random jump times and of the i.i.d. post-jump positions of $\nu$. Since we are dealing with a probability measure \(\mu\), we have that \(\ell_k - \ell_{k-1} \sim \text{Exp}(1)\) and  \(Z_k \sim \mu\). We now introduce the jumping times for the process \(X\)
	\[\tau_k:= \inf\{t \geq 0: L_t \geq \ell_k\} \quad k \geq 1.\]
	
	Then, we rewrite the Lebesgue-Stieltjes integral as the absolutely convergent sum, see \cite[Formula (3.7) page 62]{ikeda2014stochastic}
	\[
	\int_0^t\!\int_D(x - X_{s-})\,\nu(\mathrm{d}L_s,\mathrm{d}x)
	= \sum_{k:\,\ell_k\le L_t} \bigl(Z_k - X_{\tau_k-}\bigr).
	\]
	Let \(\nu(d\ell,dy)\) be the Poisson random measure on \([0,\infty)\times D\) with intensity \(d\ell\otimes\mu(dy)\), and denote
	\[
	N_t:=\nu\big([0,t]\times D\big)=\sum_{k \geq 1} \mathbf{1}_{\{\ell_k\le t\}}.
	\]
	Then the observable number of jumps up to \(t\) in our model is given by
	\[
	N_{L_t}:=\nu\big([0,L_t]\times D\big)=\sum_{k \geq 1} \mathbf{1}_{\{\ell_k\le L_t\}}= \sum_{k\ge1}\mathbf{1}_{\{\tau_k\le t\}},
	\]
	i.e. it is the process that governs the jumps from the boundary. The SDE in~\eqref{eq:SDE} can be rewritten as
	\begin{equation}
		\label{eq:SDE-sum}
		X_t =X_0+ B_t + \int_0^t n(X_s)\,L_s + \sum_{k:\,\ell_k\le L_t} \bigl(Z_k - X_{\tau_k-}\bigr); \quad X_0=x \in D,
	\end{equation}
	\(X_t \in \overline{D}\) for all \(t\) and \(\int_0^\infty \mathbf{1}_{\{X_t \in D\}} dL_t =0\) a.s.
	
	\section{Existence and Uniqueness}
	\begin{tm}
		\label{thm:existuniq}
		Let the assumptions above hold.  
		Then the SDE \eqref{eq:SDE} admits a pathwise unique strong solution 
		\((X_t,L_t)_{t \ge 0}\), adapted to the filtration generated by \((B,\nu)\), with $X$ c\`adl\`ag and $L$ continuous and nondecreasing.  
	\end{tm}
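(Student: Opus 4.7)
The plan is to construct $(X_t, L_t)_{t \ge 0}$ pathwise by concatenating reflecting Brownian motions on the random intervals between successive boundary-jump times $\tau_k$, and to deduce pathwise uniqueness from an inductive argument that peels off one such interval at a time.

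\textbf{Building block.} Since $D$ is a bounded $C^2$ domain, the classical Skorokhod problem of Lions--Sznitman yields, for every $y \in \overline D$ and every Brownian driver $W$, a pathwise unique strong solution $(Y, K)$ of
\[
	Y_t = y + W_t + \int_0^t n(Y_s)\, dK_s, \qquad Y_t \in \overline D,
\]
with $K$ continuous, nondecreasing and supported on $\{s : Y_s \in \partial D\}$. Moreover, for such a reflecting BM on a bounded $C^2$ domain, $K_t \to \infty$ almost surely as $t \to \infty$. I will apply this building block repeatedly on the intervals $[\tau_{k-1}, \tau_k]$.

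\textbf{Iterative construction.} Extract from $\nu$ the marks $(\ell_k, Z_k)_{k \ge 1}$, so that $(\ell_k - \ell_{k-1})_{k \ge 1}$ is i.i.d.\ $\mathrm{Exp}(1)$, $(Z_k)_{k \ge 1}$ is i.i.d.\ $\mu$, and both sequences are independent of each other and of $B$. Set $\tau_0 = 0$ and $X_0 = x$. Given $(\tau_{k-1}, X_{\tau_{k-1}})$ with $X_{\tau_{k-1}} \in \overline D$, apply the building block on $[\tau_{k-1}, \infty)$ starting from $X_{\tau_{k-1}}$, driven by the shifted Brownian motion $(B_{\tau_{k-1}+s} - B_{\tau_{k-1}})_{s \ge 0}$, which is a BM independent of $\mathcal F_{\tau_{k-1}}$ by the strong Markov property. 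Denoting by $L$ the local time pasted across pieces, define
\[
\tau_k := \inf\{ t \ge \tau_{k-1} : L_t \ge \ell_k \},
\]
which is a.s.\ finite by the divergence of the reflecting local time, and set $X_{\tau_k} := Z_k$. Adaptedness to $(\mathcal F_t)$ is preserved at each step by the independence structure of $(B, \nu)$.

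\textbf{Non-accumulation and uniqueness.} To conclude that $(X, L)$ is defined on all of $[0, \infty)$, I must show $\tau_k \uparrow \infty$ a.s. Writing $T_k := \tau_k - \tau_{k-1}$, the variable $T_k$ is, conditional on $(Z_{k-1}, \ell_k - \ell_{k-1})$, the time for a reflecting BM issued from $Z_{k-1}$ to accumulate $\ell_k - \ell_{k-1}$ units of local time. Since the $(Z_{k-1})$ and the exponential gaps are i.i.d.\ and mutually independent of the pieces of $B$ used on each segment, the variables $(T_k)_{k \ge 1}$ are themselves i.i.d.; as $\mu$ is supported in the open set $D$, a reflecting BM from $Z_{k-1}$ takes strictly positive time to reach $\partial D$, so $T_k > 0$ a.s.\ and hence $\mathbb{E}[T_k] > 0$. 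The strong law of large numbers then gives $\sum_k T_k = +\infty$ a.s., hence $\tau_k \to \infty$. Pathwise uniqueness follows by induction: any two solutions driven by the same $(B, \nu)$ coincide on $[0, \tau_1)$ by uniqueness of the Skorokhod problem, both jump to $Z_1$ at $\tau_1$ (determined by $\nu$), and the argument iterates on $[\tau_1, \tau_2)$ and so on.

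\textbf{Main obstacle.} The only genuinely delicate point is the non-accumulation estimate of Step~3: each $T_k$ may be arbitrarily small with positive probability, but the i.i.d.\ structure and strict positivity of $\mathbb{E}[T_k]$ together make the law of large numbers applicable. The remaining verifications---càdlàg paths for $X$, continuity and monotonicity of $L$, the constraints $X_t \in \overline D$ and $\int_0^\infty \mathbf{1}_{\{X_s \in D\}}\,dL_s = 0$---are then immediate from the corresponding properties of each reflecting segment together with the fact that all jumps land inside~$D$.
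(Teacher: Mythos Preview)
Your proof is correct and follows the same overall strategy as the paper: piecewise construction between jump times using the Lions--Sznitman solution of the Skorokhod problem, then an inductive pathwise-uniqueness argument that peels off one inter-jump interval at a time. The only substantive difference lies in the non-accumulation step. The paper argues that on any finite interval $[0,T]$ the concatenated local time $L_T$ is finite, whence only finitely many Poisson marks $\ell_k$ fall in $[0,L_T]$; you instead prove $\tau_k\to\infty$ directly by observing that the inter-jump durations $(T_k)$ are i.i.d.\ with strictly positive mean and invoking the strong law of large numbers. Your route is arguably cleaner, as it avoids the mild circularity of appealing to $L_T<\infty$ before the construction is known to reach time $T$.

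One small slip: the $(T_k)_{k\ge1}$ are not literally i.i.d., since $T_1$ is the time for the reflecting BM issued from the deterministic point $x$ (not from a $\mu$-distributed $Z_0$) to accrue $\ell_1$ units of local time. The correct claim is that $(T_k)_{k\ge2}$ are i.i.d., which is all you need for $\sum_k T_k=\infty$.
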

	
	\begin{proof}
		The idea is to construct the process $X$ piecewise, between successive jumps.
		
		\medskip
		\noindent
		\textit{Step 1 (before the first jump).} 
		Ignore the jumps and consider the reflected Brownian motion starting at $X_0$:
		\[
		\widehat{X}^{(0)}_t = X_0 + B_t + \int_0^t n\big(\widehat{X}^{(0)}_s\big)\, d\widehat{L}^{(0)}_s, 
		\quad t < \tau_1.
		\]
		For $C^2$ domains, strong existence and pathwise uniqueness for the reflected Brownian motion is classical (see for example \cite{lions1984stochastic} or \cite[Theorem 12.1]{bass1998diffusions}).  
		This defines $(\widehat{X}_t^{(0)}, \widehat{L}_t^{(0)})$ up to $t = \tau_1$, where
		\[
		\tau_1 = \inf\{ t \ge 0 : \widehat{L}^{(0)}_t \ge \ell_1 \}.
		\]
		We then set $X_t = \widehat{X}^{(0)}_t$ and $L_t = \widehat{L}^{(0)}_t$ for $t < \tau_1$, and realize the jump:
		\[
		X_{\tau_1} := Z_1, 
		\qquad L_{\tau_1} := \widehat{L}^{(0)}_{\tau_1},
		\]
		in order to extend \(L\) by continuity.\\
		\medskip
		\noindent
		\textit{Step 2 (between the first and second jump).}
		Restart at $t = \tau_1$ from $X_{\tau_1} = Z_1$.  
		Using independence and the independent increments of $B$, note that \(s\mapsto B_{s+\tau_1}-B_{\tau_1}\) is a Brownian motion, define $(\widehat{X}_t^{(1)}, \widehat{L}_t^{(1)})$, for \(t \in [\tau_1, \tau_2)\), as a new reflected motion with initial condition $Z_1$:
		\[
		\widehat{X}^{(1)}_{t-\tau_1} = Z_1 + \big(B_t - B_{\tau_1}\big) 
		+ \int_{\tau_1}^t n\big(\widehat{X}^{(1)}_{s-\tau_1}\big) \, d\widehat{L}^{(1)}_{s-\tau_1},
		\]
		until
		\[
		\widehat{L}^{(0)}_{\tau_1} + \widehat{L}^{(1)}_{t-\tau_1} = L_t
		\]
		reaches $\ell_2$, i.e. \(\tau_2= \inf\{ t \ge 0 : \widehat{L}^{(0)}_{\tau_1} + \widehat{L}^{(1)}_{t-\tau_1} \ge \ell_2 \}\).
		Set $X_t = \widehat{X}^{(1)}_{t-\tau_1}$ for $t \in [\tau_1, \tau_2)$, then jump to $Z_2$ at $t = \tau_2$, and continue recursively for all the jumping times. We observe that \( \tau_k =\inf\{ t \ge 0 : L_t \ge \ell_k\} \) are stopping times with respect to the  filtration \((B,\nu)\) since \(L\) is adapted and continuous and \(\ell_k\) are measurable. Then, \(B_{t+\tau_k} - B_{\tau_k} \) for \(t>0\) is an independent Brownian motion as we need. 
		
		\medskip
		\noindent
		\textit{Finite number of jumps in finite time.}
		For $T < \infty$, the local time \(L\) is continuous a.s. on \([0,T]\), then it satisfies \(L_T < \infty\) a.s.. Conditioned on the path of $L$,
		since $N_{L_T} \sim \mathrm{Poisson}(L_T)$, only finitely many indices $k$ have $\ell_k \le L_T$.  
		Hence no infinite number of jumps in a finite $t$-interval occurs and the concatenation is well defined for all times.
		
		\medskip
		\noindent
		\textit{Existence.}
		We have constructed $X$ c\`adl\`ag and $L$ continuous, both adapted to the filtration generated by $(B, \nu)$, satisfying
		\[
		X_t = X_0 + B_t + \int_0^t n(X_s)\, dL_s 
		+ \sum_{k :\, \ell_k \le L_t} \big(Z_k - X_{\tau_k^-}\big),
		\]
		i.e., the SDE rewritten in \eqref{eq:SDE-sum}.
		
		\medskip
		\noindent
		\textit{Pathwise uniqueness.}
		Let $(X, L)$ and $(\tilde{X}, \tilde{L})$ be two solutions on the same probability space with the same $B$ and $\nu$.  
		Fix $t < \tau_1$.  
		On this interval, no jumps occur and both solve the same Skorokhod problem.  
		By pathwise uniqueness for the reflected motion,
		\[
		X_t = \tilde{X}_t, \quad L_t = \tilde{L}_t, \quad t < \tau_1.
		\]
		At $t = \tau_1$, the Poisson random measure has the same atom $(\ell_1, Z_1)$ for both solutions, so both jump to the same point:
		\[
		X_{\tau_1} = \tilde{X}_{\tau_1} = Z_1.
		\]
		Repeat the argument on $[\tau_1, \tau_2)$, then at $\tau_2$, and so on.  
		By induction:
		\[
		(X_t, L_t) = (\tilde{X}_t, \tilde{L}_t) \quad \text{for all } t \ge 0 \ \  \text{a.s.}
		\]
		Thus pathwise uniqueness holds and the solution is measurable w.r.t. \((B, \nu)\).
	\end{proof}
	\begin{remark}
		In this paper we focus on $C^2$ domains, so it is well known that the solution to the Skorokhod problem for reflecting Brownian motion is strong and pathwise unique. If the domain is planar, $D\subset\mathbb{R}^2$, one may instead assume that $D$ is lip (i.e.\ a Lipschitz domain with Lipschitz constant less than $1$), as shown in \cite{bass2005uniqueness,bass2006pathwise}. For planar lip domains we can still use the spectral expansion of Section \ref{Sec:spectral} below for the Robin problem, since the Lipschitz constant does not affect the discreteness and finite multiplicity of the eigenvalues. When the dimension \(n\geq3\), in \cite{bass2008pathwise}, is provided that the Skorokhod problem for reflecting Brownian motion is strong and pathwise unique for \(C^{1+\gamma}\) domains in \(\mathbb{R}^n\) for \(\gamma> 1/2\).
	\end{remark}
	\begin{lemma}[Concatenation of elastic reflected Brownian motions]\label{lem:concatenation_elastic}
		Let the assumptions above hold.  We fix $\ell_0:=0$.
		Set the boundary local-time thresholds $\Delta\ell_k:=\ell_k-\ell_{k-1}$ and the jump times
		\[
		\tau_k:=\inf\{t\ge0:\ L_t\ge \ell_k\},\qquad k\ge1,
		\]
		and put $\tau_0:=0$.
		For each $k\ge0$, let $(W^k,L^{(k)})$ be a reflected Brownian motion in $\overline D$
		(starting from $W^0_0=x$ and $W^k_0=Z_k$ for $k\ge1$) driven by the Brownian increment
		\(B_{\tau_k+t}-B_{\tau_k}\), and define the $k$-th lifetime by
		\[
		\xi_k:=\inf\{t\ge0:\ L^{(k)}_t\ge \Delta\ell_{k+1}\}\in(0,\infty]\quad(\text{with }\Delta\ell_{k+1}\sim\mathrm{Exp}(1)).
		\]
		Then, almost surely on every compact time interval there are only finitely many jumps, and \(X\) coincides with the concatenation
		\begin{equation}\label{eq:concat}
			X_t=\sum_{k=0}^\infty \mathbf{1}_{[\tau_k,\tau_{k+1})}(t)\; W^k_{\,t-\tau_k},
			\qquad t\ge0,
		\end{equation}
		with $\tau_{k+1}=\tau_k+\xi_k$ and $X_{\tau_{k+1}}=Z_{k+1}$.
		In particular, $X$ is obtained by concatenating elastic reflected Brownian motions:
		for each $k$, the process behaves as a reflected Brownian motion killed when its
		boundary local time reaches an independent $\mathrm{Exp}(1)$ level, and it is then restarted at
		the independent position $Z_{k+1}\sim\mu$.
	\end{lemma}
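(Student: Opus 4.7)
The strategy is to promote the piecewise construction already carried out in the proof of Theorem~\ref{thm:existuniq} to the concatenation identity \eqref{eq:concat}, by reading off each segment as a reflected Brownian motion stopped when its own boundary local time hits an independent $\mathrm{Exp}(1)$ threshold.

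First, I would fix the marks of the Poisson random measure. By the standard representation of a Poisson point process on $[0,\infty)\times D$ with intensity $d\ell\otimes\mu(dx)$, the sequence $\{(\ell_k,Z_k)\}_{k\ge1}$ consists of strictly increasing arrival times $\ell_k$ with i.i.d.\ increments $\Delta\ell_k\sim\mathrm{Exp}(1)$ and i.i.d.\ marks $Z_k\sim\mu$, the two sequences being independent. Together with the independence between $B$ and $\nu$ built into the filtration, this is the only probabilistic input beyond reflected Brownian motion that I will need.

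Next, I would identify the segments $W^k$ inductively. For $k=0$, since no jump has occurred before $\tau_1$, the process $X$ restricted to $[0,\tau_1)$ solves on $\overline D$ the Skorokhod problem with driving Brownian motion $B$ and initial condition $x$. By the strong existence and pathwise uniqueness for the reflection problem in $C^2$ domains, $X_t=W^0_t$ and $L_t=L^{(0)}_t$ on $[0,\tau_1)$, and $\xi_0=\tau_1$ because $L$ is continuous and nondecreasing and $\ell_1=\Delta\ell_1$. At $t=\tau_1$ the last term of \eqref{eq:SDE-sum} contributes the single jump $Z_1-X_{\tau_1-}$, so $X_{\tau_1}=Z_1$. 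For the inductive step, I use that $\tau_k$ is a $(\mathcal F_t)$-stopping time (as $L$ is adapted continuous and $\ell_k$ is $\mathcal F_0$-measurable, or more carefully, independent of $B$ and measurable in the enlarged filtration), so by the strong Markov property of Brownian motion the shifted process $s\mapsto B_{\tau_k+s}-B_{\tau_k}$ is a Brownian motion independent of $\mathcal F_{\tau_k}$; in particular it is independent of $\Delta\ell_{k+1}$ and $Z_{k+1}$. Restarting from $W^k_0=Z_k$ and running reflected Brownian motion with this new driving noise, pathwise uniqueness of the Skorokhod problem identifies $X_{\tau_k+s}=W^k_s$ and $L_{\tau_k+s}-L_{\tau_k}=L^{(k)}_s$ for $s\in[0,\xi_k)$, with $\xi_k=\inf\{s\ge0:L^{(k)}_s\ge\Delta\ell_{k+1}\}$ by definition of $\tau_{k+1}$.

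The finiteness of the number of jumps on $[0,T]$ has already been observed in the proof of Theorem~\ref{thm:existuniq}: conditionally on $L_T<\infty$, $N_{L_T}\sim\mathrm{Poisson}(L_T)$, so only finitely many $\tau_k$ lie in $[0,T]$, and the concatenation \eqref{eq:concat} is a finite sum on each compact interval. The only delicate point is verifying that each $\xi_k$ depends on $(W^k,L^{(k)})$ and on a fresh $\mathrm{Exp}(1)$ independent of the past up to $\tau_k$; this is where I would be most careful, invoking the marking theorem for Poisson processes to conclude that $\Delta\ell_{k+1}$ is independent of $\mathcal F_{\tau_k}\vee\sigma(Z_{k+1})$, so that, conditionally on the past, the $k$-th segment is genuinely an elastic reflected Brownian motion killed at an independent exponential local-time threshold, and is subsequently restarted at an independent sample from $\mu$. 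This yields \eqref{eq:concat} and the claimed elastic-concatenation description.
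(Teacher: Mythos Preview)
Your proposal is correct and follows essentially the same approach as the paper: both arguments read off the concatenation from the piecewise construction in Theorem~\ref{thm:existuniq}, identifying each segment with the reflected Brownian motion $W^k$ via pathwise uniqueness of the Skorokhod problem, and conclude finiteness of jumps from $N_{L_T}\sim\mathrm{Poisson}(L_T)$. If anything, you are more explicit than the paper about the independence of $\Delta\ell_{k+1}$ from $\mathcal F_{\tau_k}$, which is what justifies the ``elastic'' interpretation of each segment; this extra care is appropriate and does not change the underlying strategy.
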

	
	\begin{proof}
		By the existence and uniqueness construction: on each $[\tau_k,\tau_{k+1})$ there are no jumps, hence
		$(X,L)$ solves the Skorokhod problem with initial point $X_{\tau_k}$ and Brownian increment $B_{t+\tau_k}-B_{\tau_k}$,
		so $X_{\tau_k+t}=W^k_t$ for $t<\xi_k$ by pathwise uniqueness of reflected Brownian motion.
		By definition of the jump integral $\nu(dL_t,dx)$ and the ordering of $(\ell_k,Z_k)$, the next jump occurs when
		the accumulated local time increases by $\Delta\ell_{k+1}$, i.e.\ at $\tau_{k+1}=\tau_k+\xi_k$, and the post-jump
		position is $X_{\tau_{k+1}}=Z_{k+1}$. Iterating and using that $L_T<\infty$ a.s.\ for every $T<\infty$
		implies finitely many jumps on compacts and yields \eqref{eq:concat}.
	\end{proof}
	\begin{remark}
		In the physics literature, a closely related model is the Brownian motion with Poissonian resetting. 
		This process can also be described via an SDE with jumps, see \cite[Formula (4)]{magdziarz2022stochastic}.
		The main difference with our setting is that in our SDE the boundary local time appears in the Poisson random measure, so that the jumps occur from the boundary into the interior. 
		In contrast, in Brownian motion with Poissonian resetting, an independent Poisson process is started, and at its jump times the Brownian particle is restarted from the initial position (or, more generally, from a fixed location).
	\end{remark}
	
	\section{Generator}		
	We first discuss the analytic interpretation of the boundary condition in the weak sense and the resulting generator of the transition semigroup. Recall that $\partial_n f= \nabla f \cdot n$ denotes the inward normal derivative on $\partial D$.
	
	Let us consider the Laplacian in $D$ with the nonlocal Robin-type boundary condition
	\begin{equation}\label{eq:weak-BC}
		\partial_n f(x) + \int_D (f(y) - f(x))\,\mu(dy) = 0, \qquad x \in \partial D,
	\end{equation}
	where $\mu$ is a finite Borel measure on $D$, in our setting a probability measure.  
	In order to interpret~\eqref{eq:weak-BC} for less regular functions, we can consider the weak notion of normal derivative (see \cite[Definition 1.2]{arendt2018diffusion}), for $u\in H^1(D) \cap C(\overline{D})$ and $\Delta u \in L^2(D)$, then $h\in L^2(\partial D)$ is the weak normal derivative of $u$ if
	\begin{align}
		\label{weak-derivative}
		\int_D \Delta u\,\varphi\,dx + \int_D \nabla u \cdot \nabla \varphi\,dx = \int_{\partial D} h\, \operatorname{tr}\varphi\, d\sigma,
		\qquad \forall\,\varphi \in H^1(D),
	\end{align}
	where $\operatorname{tr}\varphi$ denotes the trace of $\varphi$ on $\partial D$.  
	With this definition, the boundary condition~\eqref{eq:weak-BC} is understood as an equality in $L^2(\partial D)$ between the weak normal derivative and the nonlocal term.
	
	We consider the operator acting as the Laplacian on the following domain:
	\[
	\widetilde D(A) = \bigl\{\, f \in C(\overline D)\cap H^1(D)\,:\, \Delta f \in C(\overline D)\, :
	\partial_n f(x) + \int_D (f(y) - f(x))\,\mu(dy) = 0 
	\text{ on }\partial D \text{ weakly} \,\bigr\}.
	\]
	
	The problem to find a solution of
	\[
	\begin{cases}
		-\Delta u + u = f, & \text{in } D,\\[4pt]
		\partial_n u + \displaystyle\int_D\bigl(u(y)-u(x)\bigr)\,\mu(dy) = 0, & \text{on } \partial D,
	\end{cases}
	\]
	can be addressed by the variational formulation: seek $u\in H^1(D)$ such that
	\[
	\int_D \nabla u\cdot\nabla v\,dx + \int_D u v\,dx - \int_{\partial D} u v\,d\sigma
	+ \Bigl(\int_D u\,d\mu\Bigr)\Bigl(\int_{\partial D} v\,d\sigma\Bigr) = \int_D f v\,dx
	\]
	for every $v\in H^1(D)$. Indeed, $u\in C(\overline D)\cap H^1(D)$ so it is bounded on $\overline D$; the integral $\int_D u\,d\mu$ is finite for any finite measure $\mu$; the trace operator tr$: H^1(D)\to L^2(\partial D)$ is continuous for \(u \in C(\overline D)\cap H^1(D)\).
	
	We recall the relevant generation result of \cite[Theorem 5.1]{arendt2018diffusion} in the simplified setting we need.  For the special case where the weak normal derivative \(h(x)= \beta u(x)- \int_D u(y) \mu(x,dy)\), for \(\beta\equiv\)constant and \(\mu(x, \cdot)\equiv\mu(\cdot)\) independent of the hitting point on \(\partial D\) (which is the situation considered here), the hypotheses in \cite{arendt2018diffusion} are satisfied and the corresponding realization of the Laplacian with the nonlocal Robin boundary condition generates a holomorphic semigroup \(T_{\beta,\mu}(t)\) (constructed there by boundary perturbation techniques).  Moreover \(T_{\beta,\mu}(t)\) enjoys the strong Feller property and its restriction to \(C(\overline D)\) is strongly continuous. By  restricting the generator to continuous functions, we obtain the domain \(\widetilde D(A)\) introduced before, 
	where \(\partial_n\) denotes the inward normal derivative (understood in the weak sense \eqref{weak-derivative}).  In particular, for every \(f\) in this domain, setting \(g:=\Delta f\in C(\overline D)\), the semigroup satisfies the uniform generator limit
	\begin{equation}\label{eq:conv-unif-revised}
		\lim_{t\downarrow0}\Big\|\frac{T_{\beta,\mu}(t)f-f}{t}-g\Big\|_{\infty}=0,
	\end{equation}
	i.e. the (analytic) generator acts as the Laplacian on the space \(C(\overline D)\cap H^1(D)\) with continuous Laplacian and the weak nonlocal boundary condition.

	Now, we provide the analogous statements for a more regular domain by using a direct probabilistic argument, based on It\^o formula, for the semigroup \(P_t f (x)= \mathbf{E}_x[f(X_t)]\).
	\begin{tm}\label{tm:generator_feller}
		Under the assumptions stated in Section \ref{sec:setting} on \(D\), \(\mu\), and the process \(X\) with semigroup \(P f\),
		let \(f\) be in the following domain
		\begin{equation}\label{bc}
			D(A)=\big\{f \in C^2(\overline{D}): \,\partial_n f(x) + \int_D \bigl(f(y) - f(x)\bigr)\,\mu(dy) = 0,
			\ x \in \partial D\big\},
		\end{equation}
		where \(\partial_n f \) denotes the inward normal derivative. Then the limit
		\[
		{A} f(x) := \lim_{t \downarrow 0} \frac{P_t f(x) - f(x)}{t}
		\]
		exists uniformly in \(x \in \overline{D}\) and is given by
		\[
		{A} f(x) = \tfrac12 \Delta f(x), \qquad x \in D.
		\]
	\end{tm}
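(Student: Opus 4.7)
The plan is to apply It\^o's formula for semimartingales with jumps to $t\mapsto f(X_t)$ with $f\in D(A)$, take expectations under $\mathbf{P}_x$, and use the boundary condition in~\eqref{bc} to cancel the local-time contributions, reducing the increment of the semigroup to a time integral of $\tfrac12 \Delta f$.

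First, I would decompose $f(X_t)-f(x)$ using It\^o's formula together with the SDE~\eqref{eq:SDE-sum}. The continuous part of $X$ is $x+B_t+\int_0^t n(X_s)\,dL_s$ and $\langle X^c\rangle_s=s\,I$, while the jumps are $\Delta X_{\tau_k}=Z_k-X_{\tau_k-}$. Expanding $\nabla f(X_{s-})\cdot dX_s$ via the SDE and noting that the first-order jump contributions $\nabla f(X_{\tau_k-})\cdot\Delta X_{\tau_k}$ cancel against those produced by the general It\^o jump term, I obtain the clean decomposition
\begin{equation*}
f(X_t)-f(x)=\int_0^t\nabla f(X_s)\cdot dB_s+\int_0^t\partial_n f(X_s)\,dL_s+\tfrac12\int_0^t\Delta f(X_s)\,ds+\sum_{k:\,\tau_k\le t}\bigl[f(Z_k)-f(X_{\tau_k-})\bigr],
\end{equation*}
where in the local-time integral I used that $dL_s$ is supported on $\{X_s\in\partial D\}$, hence $\nabla f\cdot n=\partial_n f$ there.

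Next, I take $\mathbf{E}_x$ on both sides. Since $f\in C^2(\overline D)$ and $\overline D$ is compact, $\nabla f$ is bounded, so $\int_0^t\nabla f(X_s)\cdot dB_s$ is a true martingale with zero mean. The jump sum I rewrite as $\int_0^t\!\int_D\bigl(f(y)-f(X_{s-})\bigr)\,\nu(dL_s,dy)$; compensating $\nu$ against its intensity $d\ell\otimes\mu(dy)$ and changing variables $\ell=L_s$, using that $L$ is continuous and nondecreasing so that $X_{s-}=X_s$ for $dL_s$-a.e.\ $s$, its expectation becomes $\mathbf{E}_x\big[\int_0^t\!\int_D (f(y)-f(X_s))\mu(dy)\,dL_s\big]$. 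Combining this with $\mathbf{E}_x[\int_0^t\partial_n f(X_s)\,dL_s]$ yields the single boundary integral
\begin{equation*}
\mathbf{E}_x\!\left[\int_0^t\!\Big(\partial_n f(X_s)+\!\int_D(f(y)-f(X_s))\,\mu(dy)\Big)\,dL_s\right],
\end{equation*}
whose integrand vanishes exactly where $dL_s$ is supported, thanks to the boundary condition defining $D(A)$. Applying Fubini to the remaining volume term gives
\begin{equation*}
P_t f(x)-f(x)=\int_0^t\tfrac12\,P_s(\Delta f)(x)\,ds.
\end{equation*}

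Finally, I divide by $t$ and let $t\downarrow 0$. Since $\Delta f\in C(\overline D)$ and the semigroup $(P_s)$ is strongly continuous on $C(\overline D)$ (this is exactly the property recalled above \eqref{eq:conv-unif-revised} from \cite{arendt2018diffusion}, and can alternatively be verified directly using $\mathbf{E}_x[L_s]=O(\sqrt s)$ to bound the probability of a jump before time $s$), the Cesaro-type average $\tfrac1t\int_0^t \tfrac12 P_s(\Delta f)\,ds$ converges to $\tfrac12\Delta f$ uniformly on $\overline D$, which is the claim. The main obstacle is the compensation step: one must justify carefully that the random measure $\nu(dL_s,dy)$, driven by the continuous additive functional $L$ independent of the Poisson atoms, has compensator $dL_s\otimes\mu(dy)$; here the continuity of $L$ (no atoms) and the independence of $\nu$ from $B$ are essential, and together they let us reconcile the $dL_s$ and $\mu(dy)$ clocks with the boundary condition.
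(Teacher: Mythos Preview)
Your proposal is correct and follows essentially the same approach as the paper: It\^o's formula for $f(X_t)$, compensation of the Poisson jump integral against $dL_s\otimes\mu(dy)$, cancellation of the boundary terms via the nonlocal Robin condition, and then the Dynkin-type identity $P_tf-f=\int_0^t\tfrac12 P_s(\Delta f)\,ds$ combined with strong continuity of $(P_s)$ on $C(\overline D)$ from \cite{arendt2018diffusion}. Your treatment is slightly more explicit about the It\^o jump bookkeeping and about why $X_{s-}=X_s$ for $dL_s$-a.e.\ $s$, but the argument is the same.
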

	
	\begin{proof}
		Fix \(f\in D(A)\).
		We apply the It\^o's formula to the semimartingale \(f(X_t)\). Using the decomposition of \(X\) we obtain
		\[
		\begin{split}
			df(X_t)
			&= \nabla f(X_t)\cdot dB_t + \tfrac12\Delta f(X_t)\,dt + \nabla f(X_t)\cdot n(X_t)\,dL_t \\
			&\qquad\qquad + \int_D\bigl(f(y)-f(X_{t-})\bigr)\,\nu(dL_t,dy).
		\end{split}
		\]
		Take expectation under \(\mathbf{P}_x\). The stochastic integral \(\int_0^t\nabla f(X_s)\cdot dB_s\) is a martingale and vanishes after expectation. For the Poisson integral we use that \(\nu\) has compensator \(\mu(dy)\,d\ell\); hence
		\[
		\mathbf{E}_x\!\left[\int_0^t\int_D\bigl(f(y)-f(X_{s-})\bigr)\,\nu(dL_s,dy)\right]
		= \mathbf{E}_x\!\left[\int_0^t\int_D\bigl(f(y)-f(X_s)\bigr)\,\mu(dy)\,dL_s\right],
		\]
		that comes from the fact that \(M_t := \int_0^t \int_D (x- X_{s-}) \nu(ds,dx) - \int_0^t \int_D (x- X_{s-})ds \mu(dx)\) is a martingale, see \cite[Formula (3.8), page 62]{ikeda2014stochastic}.
		Therefore, we  rewrite the semigroup
		\[
		\begin{split}
			\mathbf{E}_x[f(X_t)]-f(x)
			&= \mathbf{E}_x\!\left[\int_0^t \tfrac12\Delta f(X_s)\,ds\right] \\
			&\quad + \mathbf{E}_x\!\left[\int_0^t\Bigl(\partial_n f(X_s)
			+\int_D\bigl(f(y)-f(X_s)\bigr)\mu(dy)\Bigr)\,dL_s\right].
		\end{split}
		\]
		Because \(L_s\) increases only on \(\{X_s\in\partial D\}\) and \(f\) satisfies the boundary condition,
		the integrand in the second term vanishes every time \(X_s\in\partial D\).
		By Dynkin's formula, we obtain
		\[
		P_t f(x)-f(x)=\mathbf{E}_x\!\Big[\int_0^t \tfrac12\Delta f(X_s)\,ds\Big]=\int_0^t P_s \left(\frac12\Delta f\right)\,ds,\]
		for the semigroup \(P\). By assumption $f\in D(A)$, hence $g:=\tfrac12\Delta f\in C(\overline D)$. 
		Moreover, from \cite[Theorem 5.1]{arendt2018diffusion} as observed before, the semigroup $(P_t)_{t\ge0}$ is strongly continuous on $C(\overline D)$,
		so 
		\[
		\lim_{s\downarrow0}\|P_s g - g\|_\infty = 0.
		\]
		Fix $\varepsilon>0$ and choose $\delta>0$ such that $\|P_s g - g\|_\infty<\varepsilon$ for all $0\le s<\delta$.
		If $0<t<\delta$ we then have
		\[
		\Big\|\frac{1}{t}\int_0^t P_s g\,ds - g\Big\|_\infty
		\le \frac{1}{t}\int_0^t \|P_s g - g\|_\infty\,ds
		\le \frac{1}{t}\int_0^t \varepsilon\,ds = \varepsilon.
		\]
		Thus $\lim_{t\downarrow0}\big\|\frac{1}{t}\int_0^t P_s g\,ds - g\big\|_\infty=0$, i.e. the left-hand side
		converges uniformly in $x\in\overline D$ to $g(x)=\tfrac12\Delta f(x)$. This yields
		\[
		\lim_{t\downarrow0}\sup_{x\in\overline D}\Big|\frac{P_t f(x)-f(x)}{t}-\tfrac12\Delta f(x)\Big| = 0,
		\]
		and completes the proof of the generator identity.
	\end{proof}
	\begin{remark}
		The assumptions of Theorem~\ref{tm:generator_feller} are rather strong. 
		In some cases it is possible to apply It\^o's formula in Sobolev spaces. 
		In this work, as in \cite[Theorem~3.1.1]{pilipenko2014introduction}, we prefer to focus on functions with \(\Delta f \in C(\overline{D})\).
	\end{remark}
	
	\section{Paths description}
	The process $X$ can be described pathwise as follows. Between jumps, it behaves like a reflected Brownian motion in $\overline D$. Whenever it hits the boundary $\partial D$, it spends some \emph{local time} $L_t$ there. An independent Poisson point process on $\mathbb{R}_+ \times D$ with intensity $d\ell \otimes \mu(dx)$ marks random instants of the accumulated local time. When the local time $L_t$ reaches $\ell_k$, the process $X$ instantaneously jumps to the interior point $Z_k$ and continues its dynamics.
	
	In particular, since $\mu$ is a probability measure, the time spent on the boundary before a jump, measured in local time, is exponentially distributed with parameter $1$. This is analogous to the \emph{elastic Brownian motion}, where the particle is killed after an exponential amount of local time at the boundary, but here the killing is replaced by a restart inside $D$ with distribution $\mu$.
	
	From the boundary condition \eqref{bc} in \(D(A)\), the integral $\int_D f(y)\,\mu(dy)$ corresponds to the jump part, while the term $\partial_n f(x)=f(x)$ at the boundary reflects the connection with the elastic Brownian motion. 
	Compared to the standard Robin condition there is a sign change, due to our choice of the inward unit normal in the SDE. 
	Recalling that the outward normal $n^o$ satisfies $n^o=-n$, hence $\partial_{n^o}f=-\partial_n f$, 
	the boundary condition can be written in the usual Robin form with killing rate~$1$:
	\[
	\partial_{n^o} f(x) + f(x)=0, \qquad x\in\partial D.
	\]

	\begin{figure}[h]
		\centering
		\includegraphics[width=10.5cm]{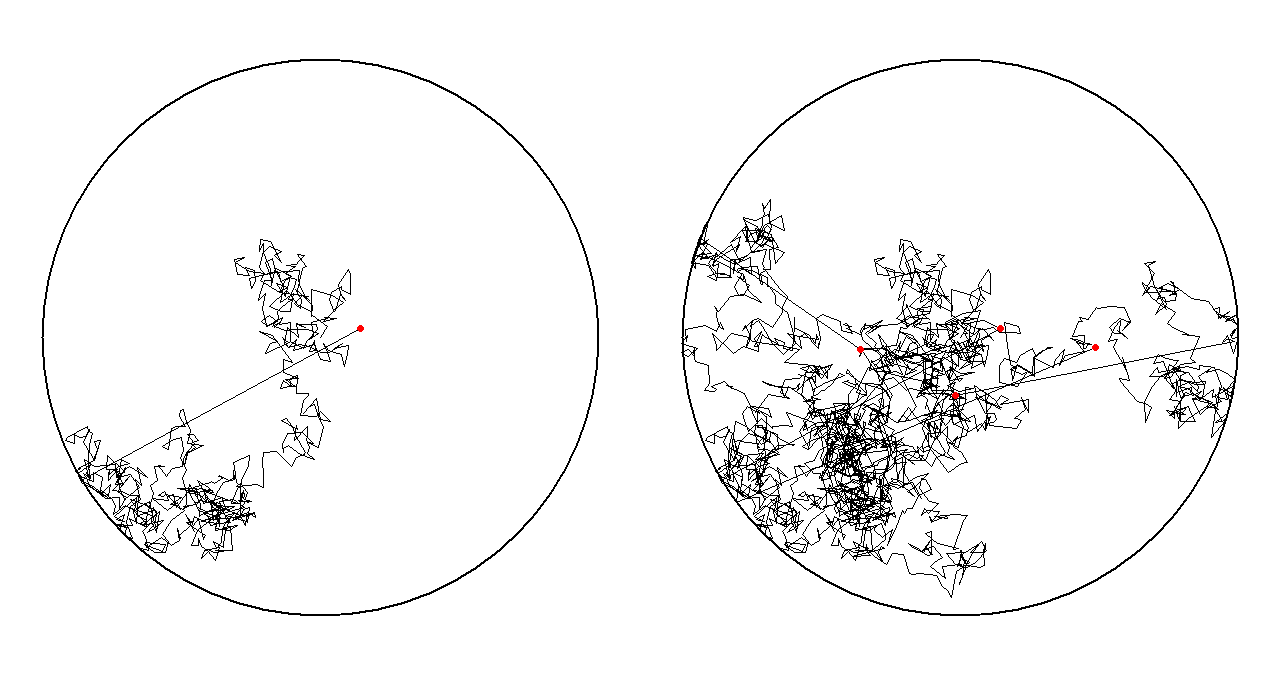} 
		\caption{A possible path for \(X\) in a disk. On the left, the paths leading up to the first jump are displayed. On the right, a possible path for \(X\)  after the fourth jump, with the red points indicating restart points that follow a standard normal random variable.}
		\label{fig:paths}
	\end{figure}	
	If $\mu$ is a finite (not necessarily probability) measure the statements of Theorem \ref{thm:existuniq} and Theorem \ref{tm:generator_feller} remain valid.
	The waiting time at the boundary is exponentially distributed with parameter $\kappa=\mu(D)$, and at each jump the process restarts from a point sampled according to the normalised measure $\mu / \kappa$, while inside \(D\) we have a reflecting Brownian motion.
	In the connection with the elastic Brownian motion, this corresponds to  the Robin boundary condition
	\[
	\partial_{n^o} f(x) + \kappa f(x) = 0, \qquad x \in \partial D.
	\]
	Then, the picture is similar to the one described before.
	
	\section{Invariant measure}
	Let $X^e_t$ be the elastic Brownian motion on $D$, with transition density $p^e(t,x,y)$ for $t>0$, $x\in D$, $y\in D$. Its generator is
	\[
	A^e f = \tfrac{1}{2}\Delta f, \qquad D(A^e) = \{ f, \Delta f\in C(\overline{D}), f \in H^1(D) : \partial_n f(x)= \kappa f(x),\ x\in\partial D\},\quad \kappa>0.
	\]
	The associated Green function is
	\[
	G(x,y) = \int_0^\infty p^e(t,x,y)\,dt.
	\]
	By symmetry, for every $x\in D$ we have, in distributional sense,
	\begin{equation}\label{eq:G}
		-\tfrac{1}{2}\Delta_y G(x,y) = \delta_x(y)\quad\text{in }D,\qquad
		\partial_n^y G(x,y) - \kappa\,G(x,y)=0\quad\text{on }\partial D.
	\end{equation}
	
	Similarly to the findings of \cite{grigorescu2007ergodic}, \cite{ben2009ergodic} and \cite{bogdan2025stable} for diffusions with nonlocal Dirichlet conditions, we expect that in our case as well, the invariant measure depends on the Green's function with Robin boundary conditions. Determining the invariant measure is a fundamental step for characterizing the ergodic properties of the system and for identifying its long-term physical equilibrium, since it describes the stationary spatial distribution reached by the process after a large time.
	\begin{tm}[Invariant measure]
		Let $X$ be defined in equation (\ref{eq:SDE}), where $\mu$ is a finite measure and $\kappa=\mu(D)$. Then $X$ admits an invariant probability measure
		\[
		\pi(dy):=\frac{\phi(y)\,dy}{\displaystyle\int_D\phi(z)\,dz},
		\]
		where \(\mu\) is such that, for all $y\in\overline D$,
		\[
		\phi(y):=\int_D G(x,y)\,\mu(dx) < \infty.
		\]
	\end{tm}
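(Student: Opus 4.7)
My strategy is to exploit the regenerative decomposition of $X$ supplied by Lemma~\ref{lem:concatenation_elastic}. Under $\mathbf{P}_\eta$ with $\eta := \mu/\kappa$, the post-jump positions $Z_k$ are i.i.d.\ $\sim\eta$ and, on each interval $[\tau_{k-1},\tau_k)$, the process $X$ has the law of an elastic reflected Brownian motion with killing rate $\kappa$ started at $Z_{k-1}$ and run up to its exponential lifetime $\zeta_k$; moreover $\{\zeta_k\}$ and $\{Z_k\}$ are independent. Consequently, under $\mathbf{P}_\eta$ the cycles $\{(X_{\tau_{k-1}+t})_{t\in[0,\tau_k-\tau_{k-1})}\}_{k\ge 1}$ are i.i.d.\ with a positive, continuously distributed length, and $X$ is a classical regenerative process.

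The classical cycle-occupation formula for the invariant probability measure of such a process reads
\[
\pi(A)=\frac{1}{\mathbf{E}_\eta[\tau_1]}\,\mathbf{E}_\eta\!\left[\int_0^{\tau_1}\mathbf{1}_A(X_s)\,ds\right],\qquad A\in\mathcal{B}(\overline D),
\]
provided $\mathbf{E}_\eta[\tau_1]<\infty$. The key computation is that, under $\mathbf{P}_\eta$, the path $(X_s)_{s<\tau_1}$ is distributed as the elastic Brownian motion $X^e$ until its lifetime, so by Fubini and the defining identity $G(x,y)=\int_0^\infty p^e(t,x,y)\,dt$
\[
\mathbf{E}_\eta\!\left[\int_0^{\tau_1}\mathbf{1}_A(X_s)\,ds\right]=\int_D\eta(dx)\int_A G(x,y)\,dy=\frac{1}{\kappa}\int_A\phi(y)\,dy.
\]
Taking $A=D$ yields $\mathbf{E}_\eta[\tau_1]=\frac{1}{\kappa}\int_D\phi(z)\,dz$, and this is finite because $v(x):=\mathbf{E}_x[\zeta]=\int_D G(x,y)\,dy$ solves the Robin problem $-\tfrac12\Delta v=1$ in $D$, $\partial_n v=\kappa v$ on $\partial D$, and is therefore bounded on $\overline D$ by the maximum principle. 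The factor $1/\kappa$ then cancels and gives $\pi(dy)=\phi(y)\,dy/\int_D\phi(z)\,dz$.

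The main obstacle is the rigorous justification of the two ingredients of the regenerative identification: that the cycles under $\mathbf{P}_\eta$ are genuinely i.i.d., and that the cycle ratio produces a measure invariant for the continuous-time semigroup $P_t$ and not only for the embedded jump chain $(X_{\tau_k})$. Both follow by combining the strong Markov property of reflected Brownian motion at each stopping time $\tau_k$, as already used in the proof of Theorem~\ref{thm:existuniq}, with the independence of $\{Z_k\}$ and $\{\Delta\ell_k\}$ built into the driving Poisson random measure $\nu$; the passage from the chain to the semigroup is the standard regenerative argument based on renewal stationarity, valid because $\tau_1$ is non-arithmetic. As an independent consistency check one can verify $\int_D Af\cdot\phi\,dy=0$ for every $f\in D(A)$ via Green's second identity, using the distributional relations $-\tfrac12\Delta\phi=\mu$ in $D$ and $\partial_n\phi=\kappa\phi$ on $\partial D$ inherited from \eqref{eq:G}; the nonlocal Robin condition \eqref{bc} on $f$ then forces all bulk and boundary cross-terms to cancel.
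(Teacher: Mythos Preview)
Your regenerative approach is correct and takes a genuinely different route from the paper. The paper's proof is purely analytic: it verifies directly that $\int_D (Af)\,\phi\,dy=0$ for every $f\in D(A)$ via Green's second identity, using $-\tfrac12\Delta\phi=\mu$ in $D$ and $\partial_n\phi=\kappa\phi$ on $\partial D$ (inherited from \eqref{eq:G}) together with the nonlocal Robin condition $\partial_n f-\kappa f=-m(f)$ on $\partial D$. The only nontrivial step is to evaluate the boundary quantity $S:=\int_{\partial D}\phi\,d\sigma$, which the paper pins down by testing against $f\equiv1$ (an element of $D(A)$) to obtain $S=2$, after which all bulk and surface terms cancel; invariance then follows from the Ethier--Kurtz criterion. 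This is precisely the computation you sketch as your ``independent consistency check'', so you already have both arguments in hand.

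What each approach buys: your probabilistic route explains \emph{why} $\phi$ appears---as the occupation density of a single elastic cycle started from $\mu/\kappa$---and delivers ergodicity and convergence $P_t(x,\cdot)\to\pi$ as byproducts of the renewal structure, at the cost of invoking regenerative-process machinery. The paper's route is shorter and entirely self-contained (just Green's identity and one clever test function), but gives invariance only, with no dynamical information. Two small remarks on your write-up: (i) non-arithmeticity of $\tau_1$ is needed for the key renewal theorem and hence for convergence, but invariance of the cycle-occupation measure holds regardless, so your justification of the ``passage from the chain to the semigroup'' is slightly misplaced; (ii) the boundedness of $v(x)=\mathbf{E}_x[\zeta]=\int_D G(x,y)\,dy$ follows more transparently from elliptic regularity on a $C^2$ domain (or from continuity of $x\mapsto\mathbf{E}_x[\zeta]$ on the compact $\overline D$) than from the maximum principle as stated.
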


	\begin{proof}
		We want to prove that for every $f$ in the domain \(D(A)\), such that
		\begin{equation}\label{bc2}
			\partial_n f - \kappa f = -m(f) \quad\text{on }\partial D, \qquad m(f):=\int_D f(y)\,\mu(dy),
		\end{equation}
		we have
		\[
		\int_D (Af)(y)\,\pi(dy)=0.
		\]
		For smooth $f$ and $\phi$ we apply the second Green's identity
		\[
		J(f):=\int_D \tfrac{1}{2}\Delta f\,\phi\,dy  = \int_D f\,\tfrac{1}{2}\Delta\phi\,dy - \tfrac{1}{2}\int_{\partial D}\big(\partial_n f\,\phi - f\,\partial_n\phi\big)\,d\sigma,
		\]
		where the sign before the surface integral depends on the use of the inward normal derivatives (and not the outward ones). From the definition of $\phi$ and $G$, we have
		\[ -\tfrac{1}{2}\Delta\phi = \mu \quad\text{in }D,\qquad \partial_n\phi - \kappa\phi = 0 \quad\text{on }\partial D.\]
		The boundary condition for $f$ is exactly (\ref{bc2}). By substituting these equalities, we get
		\[
		\int_D \tfrac{1}{2}\Delta f\,\phi\,dy = -m(f) + \tfrac{1}{2}\int_{\partial D} m(f)\,\phi\,d\sigma = -m(f)+\tfrac{1}{2}m(f)S,
		\]
		where $S:=\int_{\partial D}\phi\,d\sigma$.
		
		The aim is to calculate \(S\). Take $f\equiv1$. We observe that \(f \in D(A)\), since \(\partial_n 1=0\), and the non-local integral at the boundary is zero. Then $m(1)=\mu(D)=\kappa$. By direct computation, from \(\Delta 1=0\), we have $J(1)=0$, which implies
		\[0=J(1)=-m(1) + \frac{1}{2} m(1) S= -\kappa + \frac{\kappa}{2} S,\]
		that leads to $S=2$.

		With $S=2$, the formula reduces to
		\[
		J(f)=\int_D \tfrac{1}{2}\Delta f\,\phi\,dy=-m(f) + m(f)=0\quad\text{for every }f\in D(A).
		\]
		Normalizing by $\int_D \phi$ we obtain
		\[
		\int_D (Af)(y)\,\pi(dy)=0.
		\]
		Thus, by using \cite[Proposition 9.2]{ethier1986markov}, $\pi$ is invariant.
	\end{proof}
	From the last theorem, we obtain that for long times the process \(X\) looks like an elastic Brownian motion started from \(\mu\), which is consistent with the paths description.
	\section{Spectral theorem}
	\label{Sec:spectral}
	It is known (see, e.g., \cite[Section 4.2]{henrot2017shape} that the Robin eigenvalues form a discrete sequence with finite multiplicity in any Lipschitz domain; since our domain is of class $C^2$, this property clearly holds in our setting.
	Let $\{\phi_j\}_{j\ge1}$ be an orthonormal basis of $L^2(D)$ formed by eigenfunctions of the operator
	$ A^e=\tfrac12\Delta$ with domain
	$\mathrm{Dom}(\mathcal A^e)=\{w\in H^1(D):\ \partial_n w=\kappa w\ \text{on }\partial D\}$. We write
	\[ A^e\phi_j=-\lambda_j\phi_j,\qquad \lambda_j>0,\ \lambda_j\uparrow\infty.
	\]
	
	Before we present the main theorem, let us start with some notation. Define the scalar
	\[c(t):=\int_D u(t,y)\,\mu(dy),\qquad t\ge0.\]
	Then the boundary condition for the parabolic problem
	\[\partial_n u(t,x)+\int_D\bigl(u(t,y)-u(t,x)\bigr)\,\mu(dy)=0\]
	can be seen as the inhomogeneous Robin problem
	\[\partial_n u(t,x)=\kappa\,u(t,x)-c(t), \qquad x\in\partial D.\]
	We expand the initial datum $f$ on the eigenbasis:
	\[f=\sum_{j\ge1} f_j\phi_j,\]
	with $f_j=\langle f,\phi_j\rangle_{L^2(D)}$ and define \(\gamma_j=\langle 1,\phi_j\rangle_{L^2(D)}\). Then the following theorem holds.
	\begin{tm}
		\label{tm:spectral}
		Let $D\subset\mathbb{R}^n$ be a bounded $C^2$ domain. Let $\mu$ be a finite Borel measure on $D$ and set
		$\kappa:=\mu(D)>0$. Let $f\in C(\overline{D})$ be the initial datum.
		
		Consider the initial boundary value problem
		\begin{equation*}
			\begin{cases}
				\partial_t u(t,x)=\tfrac12\Delta u(t,x), & t>0,\ x\in D,\\[4pt]
				\partial_n u(t,x)+\int_D\bigl(u(t,y)-u(t,x)\bigr)\,\mu(dy)=0, & t>0,\ x\in\partial D,\\[4pt]
				u(0,x)=f(x), & x\in D,
			\end{cases}
		\end{equation*}
		where $\partial_n$ denotes the inward normal derivative on $\partial D$.

		Then, the following hold:
		
		\noindent
		1. The solution $u(t,x)$ has the spectral representation
		\begin{equation}\label{eq:solution}
			u(t,x)=\sum_{j\ge1}\Bigl[e^{-\lambda_j t} f_j + \frac{\gamma_j}{\kappa}\lambda_j\int_0^t e^{-\lambda_j (t-s)}\,c(s)\,ds\Bigr]\;\phi_j(x),\qquad t\ge0,\ x\in D.
		\end{equation}
		The series converges in $L^2(D)$ for every fixed $t\geq0$.
		
		\noindent
		2. $c$ satisfies the Volterra integral equation
		\begin{equation}\label{eq:Volterra_gamma}
			c(t) = \sum_{j\ge1} \alpha_j f_j e^{-\lambda_j t} + \sum_{j\ge1} \frac{\gamma_j \alpha_j}{\kappa} \int_0^t e^{-\lambda_j (t-s)} c(s)\,ds, 
			\qquad t>0,
		\end{equation}
		for \(\alpha_j= \int_D \phi_j(x) \mu(dx)\). 
		Moreover, the Laplace transform of $c$ is
		\begin{equation}\label{eq:Lapl_gamma}
			\tilde{c}(z) = \frac{\sum_{j\ge1} \frac{\alpha_j f_j}{z+\lambda_j}}{1 - \sum_{j\ge1} \frac{\gamma_j \alpha_j}{\kappa}\frac{1}{z+\lambda_j}}, 
			\qquad \Re z>0.
		\end{equation}
		All series converge absolutely for each $t>0$ and $\Re z>0$.
		
		\noindent
		3. The Volterra integral equation \eqref{eq:Volterra_gamma} admits a unique continuous solution $c$ on $[0,\infty)$. 
		In particular, $c$ is uniquely determined by the initial datum $f$ and the measure $\mu$ (equivalently it can be recovered from the Laplace transform \eqref{eq:Lapl_gamma} by inversion), and hence $u$ is uniquely determined by $c$ via \eqref{eq:solution}.
	\end{tm}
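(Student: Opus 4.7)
The plan is to expand $u(t,\cdot)$ in the Robin eigenbasis $\{\phi_j\}$, rewrite the nonlocal condition as an inhomogeneous Robin problem with spatially constant surface forcing $c(t)$, and then close the resulting system of ODEs into a scalar Volterra equation for $c$ that is handled via Laplace transform and classical Volterra theory.

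First, I would rewrite the boundary condition as $\partial_n u(t,x)=\kappa u(t,x)-c(t)$ on $\partial D$. Setting $u_j(t):=\langle u(t,\cdot),\phi_j\rangle_{L^2(D)}$ and differentiating in $t$, the heat equation together with Green's identity (with the paper's inward-normal convention) applied to the pair $(u,\phi_j)$ gives $u_j'(t)=-\lambda_j u_j(t)+\tfrac{c(t)}{2}\int_{\partial D}\phi_j\,d\sigma$, once $\partial_n\phi_j=\kappa\phi_j$ and the inhomogeneous Robin condition for $u$ are inserted. The key algebraic step is the identity $\int_{\partial D}\phi_j\,d\sigma=\tfrac{2\lambda_j\gamma_j}{\kappa}$, obtained by applying Green's identity to $(1,\phi_j)$ using $\Delta 1=\partial_n 1=0$. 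This produces the scalar linear ODE $u_j'(t)=-\lambda_j u_j(t)+\tfrac{\lambda_j\gamma_j}{\kappa}c(t)$ with $u_j(0)=f_j$, whose variation-of-constants solution is exactly the bracket in~\eqref{eq:solution}, establishing part~(1) once $L^2$ convergence of the series is verified by Parseval.

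For part~(2), integrating the spectral representation against $\mu$ and using $\alpha_j=\int_D\phi_j\,d\mu$ closes the system into the Volterra equation~\eqref{eq:Volterra_gamma}; taking Laplace transforms converts each convolution into $\tilde c(z)/(z+\lambda_j)$, and isolating $\tilde c(z)$ algebraically yields~\eqref{eq:Lapl_gamma}. For part~(3), the equation has the form $c=g+K*c$ with continuous forcing $g$ and a locally integrable convolution kernel $K$; existence and uniqueness of a continuous solution on $[0,\infty)$ follow from the standard resolvent-kernel (Picard) construction, equivalently from a Gronwall argument on $[0,T]$ combined with continuation, after which the series in~\eqref{eq:solution} recovers $u$ uniquely.

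The main technical obstacle is the rigorous justification of the termwise manipulations: absolute convergence of the series defining the kernel and $\tilde c(z)$ for $\Re z>0$, validity of Parseval-type $L^2$ convergence for the spectral expansion, and legitimacy of the boundary-trace computations after reshuffling summations. These reduce to Weyl asymptotics for $\lambda_j$, a trace bound of the form $\|\mathrm{tr}\,\phi_j\|_{L^2(\partial D)}\le C\sqrt{\lambda_j}$, and Cauchy--Schwarz bounds $|\alpha_j|,|\gamma_j|\le \mu(D)^{1/2}$ (using that $\mu$ is finite and $D$ is bounded), all of which are standard under the $C^2$ regularity hypothesis on the domain assumed throughout the paper.
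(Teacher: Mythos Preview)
Your derivation of the coefficient ODE is correct and in fact more direct than the paper's. The paper introduces the auxiliary function $v(t,x)=u(t,x)-c(t)/\kappa$, which satisfies the \emph{homogeneous} Robin condition $\partial_n v=\kappa v$; expanding $v$ in the eigenbasis produces $v_j'(t)+\lambda_j v_j(t)=-\tfrac{\gamma_j}{\kappa}c'(t)$, which is solved explicitly and then an integration by parts converts the $c'$-integral into the $c$-integral appearing in \eqref{eq:solution}. You bypass both the substitution and the integration by parts: applying Green's second identity directly to the pair $(u,\phi_j)$ and closing the surface term with the identity $\int_{\partial D}\phi_j\,d\sigma=2\lambda_j\gamma_j/\kappa$ lands immediately on $u_j'+\lambda_j u_j=\tfrac{\lambda_j\gamma_j}{\kappa}c$. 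The paper's substitution buys a function $v$ that genuinely lies in the Robin domain, so the projection is purely spectral with no boundary terms to track; your approach stays with $u$ at the cost of one extra Green-identity computation but arrives at the target ODE in one fewer step. Parts (2) and (3) coincide with the paper's argument.

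Two small points. First, Cauchy--Schwarz gives $|\gamma_j|\le |D|^{1/2}$, not $\mu(D)^{1/2}$; and $|\alpha_j|\le\mu(D)^{1/2}$ does not follow from Cauchy--Schwarz unless $\|\phi_j\|_{L^2(\mu)}\le 1$, which is not guaranteed (one uses $\phi_j\in C(\overline D)$ and $\mu$ finite instead). Second, the $L^2$-convergence of \eqref{eq:solution} hinges on $c$ being bounded on $[0,t]$. The paper obtains this from the Markov property of the semigroup, namely $\|u(t,\cdot)\|_\infty\le\|f\|_\infty$ and hence $|c(t)|\le\kappa\|f\|_\infty$. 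You implicitly rely on part~(3) for this, which is legitimate, but you should make explicit that the Volterra existence argument furnishes a continuous (hence locally bounded) $c$ \emph{before} invoking Parseval in part~(1).
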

	\begin{proof}
		1. We define the function
		\[v(t,x):=u(t,x)-\frac{c(t)}{\kappa}.
		\]
		We check boundary and interior equations. For \(x \in \partial D\), we see that
		\begin{align*}
			\partial_n v(t,x)=\partial_n u(t,x)
			=\kappa\bigl(u(t,x)-\frac{c(t)}{\kappa}\bigr)=\kappa v(t,x).
		\end{align*}
		Thus $v$ satisfies the homogeneous Robin condition $\partial_n v=\kappa v$ on $\partial D$.

		By using that \(\Delta v(t,x)=\Delta u(t,x)\), we have, for \(t >0, \, x \in D\),
		\begin{align}
			\label{vprime}
			\partial_t v(t,x)=\partial_t u(t,x)-\frac{c^\prime(t)}{\kappa}=\tfrac12\Delta u(t,x)-\frac{c^\prime(t)}{\kappa}=\tfrac12\Delta v(t,x)-\frac{c^\prime(t)}{\kappa}.
		\end{align}
		At time $t=0$ the initial value is
		\[v(0,x)=u(0,x)-\frac{c(0)}{\kappa}=f(x)-\frac{c(0)}{\kappa}.
		\]
		
		We expand $v$ on the orthonormal basis $\{\phi_j\}$:
		\[v(t,x)=\sum_{j\ge1} v_j(t)\phi_j(x),\qquad v_j(t)=\langle v(t,\cdot),\phi_j\rangle_{L^2(D)}.
		\]
		Project the evolution equation for $v$ onto $\phi_j$. By using symmetry of the Laplacian with the Robin boundary condition and \eqref{vprime}, we obtain
		\[v_j'(t)=\langle \tfrac12\Delta v,\phi_j\rangle- \frac{c^\prime(t)}{\kappa}\langle 1,\phi_j\rangle = -\lambda_j v_j(t)-\frac{\gamma_j }{\kappa}c'(t).
		\]
		Thus, each projection \(v_j\) satisfies the linear ODE
		\[v_j'(t)+\lambda_j v_j(t)=-\frac{\gamma_j }{\kappa}c'(t),\qquad v_j(0)=f_j-\frac{\gamma_j }{\kappa} c(0).
		\]
		The solution is
		\[v_j(t)=e^{-\lambda_j t}\bigl(f_j-\frac{\gamma_j }{\kappa} c(0)\bigr)-\frac{\gamma_j }{\kappa}\int_0^t e^{-\lambda_j (t-s)}\,c'(s)\,ds.
		\]

		We recall that $u_j(t)=\langle u(t,\cdot),\phi_j\rangle=v_j(t)+\frac{\gamma_j }{\kappa} c(t)$. Therefore,
		\begin{equation}\label{eq:uj-beforeIBP}
			u_j(t)=e^{-\lambda_j t}f_j-e^{-\lambda_j t}\frac{\gamma_j }{\kappa} c(0)-\frac{\gamma_j }{\kappa}\int_0^t e^{-\lambda_j (t-s)}c'(s)\,ds+\frac{\gamma_j }{\kappa} c(t).
		\end{equation}
		
		We now compute
		\[I_j(t):=\int_0^t e^{-\lambda_j (t-s)}c'(s)\,ds.
		\]
		By integrating by parts,  we get
		\[I_j(t)=c(t)-e^{-\lambda_j t}c(0)-\lambda_j\int_0^t e^{-\lambda_j (t-s)}c(s)\,ds.
		\]
		By substituting this into \eqref{eq:uj-beforeIBP}, we write
		\[u_j(t)=e^{-\lambda_j t}f_j+\frac{\gamma_j }{\kappa}\lambda_j\int_0^t e^{-\lambda_j (t-s)}c(s)\,ds.
		\]
		This provides the time part expansion of formula \eqref{eq:solution}.

		Let now discuss the convergence. From the explicit formula
		\[
		u_j(t)=e^{-\lambda_j t}f_j+\frac{\gamma_j }{\kappa}\lambda_j\int_0^t e^{-\lambda_j (t-s)}c(s)\,ds,
		\]
		one immediately obtains, for every $t>0$,
		\[
		\Bigl|\lambda_j \int_0^t e^{-\lambda_j (t-s)} c(s)\,ds\Bigr|
		\le \|c\|_{L^\infty(0,t)}\,(1-e^{-\lambda_j t})
		\le \|c\|_{L^\infty(0,t)}.
		\]
		Therefore
		\[
		|u_j(t)| \le e^{-\lambda_j t}|f_j| + |\frac{\gamma_j}{\kappa}|\,\|c\|_{L^\infty(0,t)}.
		\]
		
		By using \eqref{eq:solution} and $(a+b)^2\le 2a^2+2b^2$, we deduce
		\[
		\|u(t, \cdot)\|_{L^2(D)}^2
		=\sum_j |u_j(t)|^2
		\le 2\sum_j e^{-2\lambda_j t}|f_j|^2
		+ \frac{2}{\kappa^2}\|c\|_{L^\infty(0,t)}^2 \sum_j \gamma_j^2.
		\]
		
		Now 
		\[
		\sum_j e^{-2\lambda_j t}|f_j|^2 \le \|f\|_{L^2(D)}^2 < \infty,
		\qquad 
		\sum_j \gamma_j^2 = |D| < \infty,
		\]
		since $f\in L^2(D)$ (it is in \(C(\overline{D})\) and \(D\) is bounded).
		
		Thus, if $c$ is bounded on $[0,t]$, the spectral series defining $u(t,\cdot)$ converges in $L^2(D)$ and
		\[
		\|u(t,\cdot)\|_{L^2(D)}
		\le 2\|f\|_{L^2(D)} + 2 \frac{|D|}{\kappa^2}\,\|c\|_{L^\infty(0,t)}.
		\]
		The claim is now to provide that \(\|c\|_{L^\infty(0,t)} < \infty\). From the probabilistic point of view, we know from Theorem \ref{tm:generator_feller} that the solution is written as the semigroup
		\[u(t,x)=\mathbf{E}_x[f(X_t)]\]
		where \(X\) is the solution of \eqref{eq:SDE}, with \(\mu\) a finite measure for \(\nu\).  The semigroup is Markov from \cite[Proposition 5.3]{arendt2018diffusion}. We observe that
		\[\|u(t,\cdot)\|_{L^\infty(D)}\leq \|f\|_{L^\infty(D)} < \infty \]
		and
		\[|c(t)| \leq \|u(t,\cdot)\|_{L^\infty(D)} \mu(D) \leq \kappa \|f\|_{L^\infty(D)},\]
		which provides that \(c\) is uniformly bounded. This completes the proof of 1.
		
		2. By orthonormality and completeness of $\{\phi_j\}$ we recall
		\[
		c(t)=\int_D u(t,x)\,\mu(dx)= \sum_{j\ge1} u_j(t) \int_D \phi_j(x) \mu(dx)=\sum_{j\ge1}\alpha_j u_j(t).
		\]
		From the spectral expansion, we have
		\[
		u_j(t) = e^{-\lambda_j t} f_j + \frac{\gamma_j}{\kappa} \int_0^t e^{-\lambda_j (t-s)} c(s)\,ds.
		\]
		Multiplying by $\alpha_j$ and summing over $j$ gives
		\[
		c(t) = \sum_{j\ge1} \alpha_j f_j e^{-\lambda_j t} + \sum_{j\ge1} \frac{\gamma_j \alpha_j}{\kappa} \int_0^t e^{-\lambda_j (t-s)} c(s)\,ds,
		\]
		which is exactly \eqref{eq:Volterra_gamma}. By applying Theorem~1.2.3 in~\cite{brunner2017volterra}, we obtain that on every compact time interval the Volterra equation admits a unique continuous solution \(c\). This follows since each exponential term \(e^{-\lambda_j t}\) belongs to \(C(0,\infty)\). The solution \(c\) can be expressed in terms of \(\{\lambda_j,\phi_j,\alpha_j,f_j\}\), and therefore depends only on the domain \(D\), the initial datum \(f\), and the measure \(\mu\). In this sense, the representation is explicit. Once \(c\) is determined, it can be substituted into the spectral formula to compute \(u(t,x)\), which guarantees 3.

		For the Laplace transform, using linearity and the convolution formula, we obtain
		\[
		\tilde{c}(z) = \sum_{j\ge1} \frac{\alpha_j f_j}{z+\lambda_j} + \sum_{j\ge1} \frac{\gamma_j \alpha_j }{\kappa}\frac{\tilde{c}(z)}{z+\lambda_j},
		\]
		which rearranges to \eqref{eq:Lapl_gamma}.
		
		Convergence follows from $\sum_j |f_j|^2<\infty$ and $\sum_j \gamma_j^2 = |D| < \infty$, so all series are absolutely convergent.
	\end{proof}
	\begin{remark}
		The choice of the auxiliary constant function $h\equiv -1/\kappa$ is dictated by the boundary condition. 
		The additional term $-c(t)$ in the nonlocal boundary equation does not depend on the boundary point $x$, 
		hence it can be absorbed by adding the correction $c(t)h$ to $u$. 
		This transforms the boundary condition into a standard Robin condition for the new variable $v=u+c(t)h$. 
		As a result, the nonlocal aspect of the problem is reduced to the scalar quantity $c(t)$, 
		while $v$ can be analyzed through the usual spectral decomposition.
	\end{remark}
	\begin{remark}
		All eigenfunctions enjoy good regularity properties. In particular, by 
		(see \cite[Theorem~4.2]{agmon1962eigenfunctions}), on a bounded $C^2$ domain $D$ each eigenfunction $\phi_j$ belongs to $C^1(\overline D)$. 
	\end{remark}
	
	\noindent
	Let us see the probabilistic representation through path integrals.
	
	\begin{tm}
		\label{thm:stochastic_representation_Ito}
		Let \(B^+\) denote the reflected Brownian motion in \(\overline D\) (started at \(x\in\overline D\)) whose boundary local time we denote by \(L^+\).
		Let  \(\kappa>0\) and \(c\) be as in Theorem \ref{tm:spectral}. We assume \(u(\cdot,x) \in C^1[0,T]\), for \(T>0\) and for every \(x \in \overline{D}\), and \(u(t, \cdot) \in D(A)\), defined in \eqref{bc}, for every \(t \leq T\). 
		Then the following stochastic representation holds in \([0,T] \times \overline{D}\):
		\begin{align}
			\label{eq:stochastic_representation}
			u(t,x)=\mathbf{E}_x\!\big[ e^{-\kappa L^+_t}\, f(B^+_t)\big] \;+\; 
			\mathbf{E}_x\!\Big[ \int_0^t e^{-\kappa L^+_s}\, c(t-s)\, dL^+_s\Big].
		\end{align}
		
	\end{tm}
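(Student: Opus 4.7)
The plan is to derive the representation by applying It\^o's formula to a suitable time-space function of the reflected Brownian motion $(B^+, L^+)$, chosen so that the PDE and the inhomogeneous Robin boundary condition together cancel most terms and leave only the desired integral. Rewriting the boundary condition as $\partial_n u(t,x)=\kappa u(t,x)-c(t)$ on $\partial D$ is the key observation, since the exponential weight $e^{-\kappa L^+}$ is exactly designed to absorb the $\kappa u$ part, leaving $-c(t)$ as a source term on the local-time boundary measure.

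Concretely, I would fix $t\in[0,T]$, $x\in\overline D$, and set $v(s,y):=u(t-s,y)$ and
\[
M_s:=e^{-\kappa L^+_s}\,v(s,B^+_s),\qquad s\in[0,t].
\]
Since, by hypothesis, $u(t-s,\cdot)\in D(A)\subset C^2(\overline D)$ and $s\mapsto u(t-s,x)$ is $C^1$, It\^o's formula for semimartingales with a continuous increasing boundary term applies. Using $dB^+_s=dB_s+n(B^+_s)\,dL^+_s$ and the product rule with the $e^{-\kappa L^+}$ factor, one computes
\[
dM_s=e^{-\kappa L^+_s}\bigl[\partial_s v+\tfrac12\Delta v\bigr](s,B^+_s)\,ds+e^{-\kappa L^+_s}\nabla v(s,B^+_s)\cdot dB_s+e^{-\kappa L^+_s}\bigl[\partial_n v-\kappa v\bigr](s,B^+_s)\,dL^+_s.
\]
The $ds$-term vanishes identically in $D$ by the heat equation $\partial_t u=\tfrac12\Delta u$. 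Because $dL^+_s$ is supported on $\{B^+_s\in\partial D\}$, on its support the bracket in the last term equals $-c(t-s)$ by the rewritten boundary condition. Integrating from $0$ to $t$ yields
\[
e^{-\kappa L^+_t}f(B^+_t)-u(t,x)=\int_0^t e^{-\kappa L^+_s}\nabla v(s,B^+_s)\cdot dB_s-\int_0^t e^{-\kappa L^+_s}\,c(t-s)\,dL^+_s.
\]

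To conclude, I would take expectations under $\mathbf{P}_x$ and rearrange. The main technical point is justifying that the stochastic integral is a true martingale, not merely a local one: since $u(t-\cdot,\cdot)\in C^{1,2}([0,T]\times\overline D)$ on a bounded domain, the gradient $\nabla u$ is uniformly bounded on $[0,T]\times\overline D$, and $e^{-\kappa L^+_s}\le1$, so the integrand is bounded and Burkholder--Davis--Gundy gives a square-integrable martingale vanishing in expectation. A secondary point to verify is the measurability and integrability of $\int_0^t e^{-\kappa L^+_s}c(t-s)\,dL^+_s$; this follows from continuity of $c$ on $[0,t]$ (established in Theorem \ref{tm:spectral}, part 3) and from $\mathbf{E}_x[L^+_t]<\infty$ for reflected Brownian motion in a bounded $C^2$ domain.

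The most delicate step is the application of It\^o's formula up to the boundary with the local-time term; this is classical once $v\in C^{1,2}(\overline D)$ (see e.g.\ the Skorokhod problem framework used for reflected Brownian motion), but requires the spatial regularity $u(t,\cdot)\in C^2(\overline D)$ assumed in the hypothesis through $u(t,\cdot)\in D(A)$. Everything else — cancellation of the generator, identification of the boundary term via the Robin equation, taking expectations — is mechanical once the It\^o expansion is in hand.
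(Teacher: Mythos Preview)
Your proposal is correct and follows essentially the same approach as the paper: apply It\^o's formula to $e^{-\kappa L^+_s}\,u(t-s,B^+_s)$, use the heat equation to kill the $ds$-term and the rewritten Robin condition $\partial_n u=\kappa u-c$ to reduce the $dL^+$-term to $-e^{-\kappa L^+_s}c(t-s)\,dL^+_s$, then integrate and take expectations. The paper separates this into two steps (first It\^o for $Y_s=u(t-s,B^+_s)$, then the product rule with $M_s=e^{-\kappa L^+_s}$), while you do it in one pass; you also supply the martingale and integrability justifications that the paper leaves implicit.
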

	
	\begin{proof}
		We give a step-by-step computation.

		Fix \(t>0\) and define, for \(0\le s\le t\),
		\[
		Y_s := u(t-s,\,B^+_s).
		\]
		Then \(Y_0=u(t,x)\) (since \(B^+_0=x\)) and \(Y_t=u(0,B^+_t)=f(B^+_t)\). We will compute \(dY_s\) via It\^o's formula for the space-time function \(\overline u(s,x):=u(t-s,x)\) evaluated at the semimartingale \(B^+_s\).
		
		Then \(\partial_s\overline u(s,x) = -\partial_t u(t-s,x)\). By applying It\^o's to \(\overline u(s,B^+_s)\), and by using that the reflected process has the decomposition
		\[
		dB^+_s = dB_s + n(B^+_s)\,dL^+_s,
		\]
		where \(B\) is a standard \(n\)-dimensional Brownian motion and \(n\) is the inward unit normal on \(\partial D\). We obtain
		\[
		\begin{aligned}
			dY_s
			&= \partial_s\overline u(s,B^+_s)\,ds	+ \nabla_x u(t-s,B^+_s)\cdot\big(dB_s + n(B^+_s)\,dL^+_s\big)
			+ \tfrac12\Delta_x u(t-s,B^+_s)\,ds\\
			&= -\partial_t u(t-s,B^+_s)\,ds
			+ \nabla_x u(t-s,B^+_s)\cdot\big(dB_s + n(B^+_s)\,dL^+_s\big)
			+ \tfrac12\Delta_x u(t-s,B^+_s)\,ds \\
			&= \nabla_x u(t-s,B^+_s)\cdot dB_s
			+ \nabla_x u(t-s,B^+_s)\cdot n(B^+_s)\,dL^+_s \\
			&\qquad\qquad + \big(-\partial_t u(t-s,B^+_s) + \tfrac12\Delta_x u(t-s,B^+_s)\big)\,ds.
		\end{aligned}
		\]
		By the PDE \(\partial_t u=\tfrac12\Delta_x u\) the drift term vanishes, leaving the identity
		\[
		\; dY_s = \nabla_x u(t-s,B^+_s)\cdot dB_s \;+\; \partial_n u(t-s,B^+_s)\, dL^+_s \; 
		\]
		where we used \(\nabla_x u\cdot n = \partial_n u\) on \(\partial D\) and we recall that \(dL^+_s\) is supported on \(\{s: B^+_s\in\partial D\}\).
		
		The boundary condition \(\partial_n u=\kappa u - c(\cdot)\) evaluated at \((t-s,B^+_s)\) gives
		\[
		\partial_n u(t-s,B^+_s) = \kappa u(t-s,B^+_s) - c(t-s) = \kappa Y_s - c(t-s).
		\]
		Hence
		\[
		dY_s = \nabla_x u(t-s,B^+_s)\cdot dB_s \;+\; \big(\kappa Y_s - c(t-s)\big)\,dL^+_s.
		\]
		
		We define the process
		\[
		M_s := e^{-\kappa L^+_s}.
		\]
		Then \(dM_s = -\kappa e^{-\kappa L^+_s}\, dL^+_s = -\kappa M_s\, dL^+_s\). We apply the product rule to \(M_s Y_s\). Since \(M\) is continuous of bounded variation (it is decreasing) and \(Y\) is a semimartingale whose martingale part is \(\int \nabla_x u(t-s,B^+_s)\cdot dB_s\), the quadratic covariation \([M,Y]\) is zero. Thus, the product rule is simply
		\[
		d(M_s Y_s) = M_s\,dY_s + Y_s\,dM_s.
		\]
		We substitute the expressions for \(dY_s\) and \(dM_s\):
		\[
		\begin{aligned}
			d(M_s Y_s)
			&= M_s\Big(\nabla_x u(t-s,B^+_s)\cdot dB_s + (\kappa Y_s - c(t-s))\,dL^+_s\Big)
			+ Y_s(-\kappa M_s\,dL^+_s) \\
			&= M_s \nabla_x u(t-s,B^+_s)\cdot dB_s
			+ M_s(\kappa Y_s - c(t-s))\,dL^+_s - \kappa M_s Y_s\,dL^+_s.
		\end{aligned}
		\]
		We simplify and we have
		\[
		d\big(e^{-\kappa L^+_s} Y_s\big)
		= e^{-\kappa L^+_s}\nabla_x u(t-s,B^+_s)\cdot dB_s
		\;-\; e^{-\kappa L^+_s} c(t-s)\, dL^+_s. \; 
		\]
		We integrate the last identity from \(s=0\) to \(s=t\):
		\[
		e^{-\kappa L^+_t} Y_t - e^{-\kappa L^+_0} Y_0
		= \int_0^t e^{-\kappa L^+_s}\nabla_x u(t-s,B^+_s)\cdot dB_s
		\;-\; \int_0^t e^{-\kappa L^+_s} c(t-s)\, dL^+_s.
		\]
		Since \(L^+_0=0\), \(Y_0=u(t,x)\), and \(Y_t=f(B^+_t)\), we rearrange:
		\[
		u(t,x) = e^{-\kappa L^+_t} f(B^+_t)
		- \int_0^t e^{-\kappa L^+_s}\nabla_x u(t-s,B^+_s)\cdot dB_s
		+ \int_0^t e^{-\kappa L^+_s} c(t-s)\, dL^+_s.
		\]
		
		Now take expectation \(\mathbf{E}_x\) and the stochastic integral vanishes. Then, we conclude
		\[
		u(t,x)
		= \mathbf{E}_x\big[ e^{-\kappa L^+_t} f(B^+_t)\big]
		+ \mathbf{E}_x\!\Big[ \int_0^t e^{-\kappa L^+_s}\, c(t-s)\, dL^+_s\Big],
		\]
		which is the claimed representation.
	\end{proof}
	We recall that \(c\) is explicit from Theorem 4 point 2; it can be obtained either as the solution of the Volterra integral equation or by inverting its Laplace transform.
	\begin{remark}
		Let $X^e$ denote the elastic (Robin) Brownian motion in $D$ whose lifetime $\xi$ is the first time its boundary local time reaches the independent exponential level. Its semigroup admits the spectral representation
		\[
		P_t^e f(x)=\mathbf{E}_x\big[f(X^e_t)\mathbf{1}_{\{\xi>t\}}\big]
		=\sum_{j\ge1} e^{-\lambda_j t}\langle f,\phi_j\rangle\phi_j(x),
		\]
		and in particular
		\[
		P_t^e1(x)=\mathbf{P}_x(\xi>t)=\sum_{j\ge1} e^{-\lambda_j t}\gamma_j\phi_j(x),
		\qquad \gamma_j=\int_D\phi_j(y)\,dy.
		\]
		Hence the density of the killing time is given by
		\[
		\mathbf{P}_x(\xi\in dT)=-\partial_T P_T^e1(x)\,dT
		= \sum_{j\ge1}\lambda_j e^{-\lambda_j T}\gamma_j\phi_j(x)\,dT.
		\]
		We now decompose the solution of Theorem \ref{tm:spectral} as
		\[
		u(t,x)=\mathbf{E}_x[f(X_t)]
		=\mathbf{E}_x\big[f(X_t)\mathbf{1}_{\{\tau_1>t\}}\big]
		+\mathbf{E}_x\big[f(X_t)\mathbf{1}_{\{\tau_1\le t\}}\big],
		\]
		where \(\tau_1\) is the first restart. The first term is the contribution of trajectories with no restart up to time \(t\):
		\[
		\mathbf{E}_x\big[f(X_t)\mathbf{1}_{\{\tau_1>t\}}\big]
		=\mathbf{E}_x\big[f(X^e_t)\mathbf{1}_{\{\xi>t\}}\big]
		=\sum_{j\ge1} e^{-\lambda_j t} f_j \phi_j(x).
		\]
		For the second term, condition on the first restart time $\tau_1$ (denote the restart position by $Z\sim\mu/\kappa$). For $0<T<t$ write $T$ for the time between the restart and final time \(t\): then, by the strong Markov property and independence of the restart position,
		\[
		\mathbf{E}_x\big[f(X_t)\mathbf{1}_{\{\tau_1\in dT\}}\big]
		= \mathbf{P}_x(\tau_1\in dT)\; \mathbf{E}_{Z}\big[f(X_{t-T})\big]
		= \mathbf{P}_x(\tau_1\in dT)\; \int_D u(t-T,y)\,\frac{\mu(dy)}{\kappa}.
		\]
		Integrating over $T\in(0,t)$ and using $\mathbf{P}_x(\tau_1\in dT)=-\partial_T P_T^e1(x)\,dT$ gives
		\[
		\mathbf{E}_x\big[f(X_t)\mathbf{1}_{\{\tau_1\le t\}}\big]
		= \int_0^t \Big(-\partial_T P_T^e1(x)\Big)\; \frac{c(t-T)}{\kappa}\,dT.
		\]
		Using the spectral expansion for $-\partial_T P_T^e1(x)$ and the change of variables \(s=t-T\) one obtains
		\[
		\mathbf{E}_x\big[f(X_t)\mathbf{1}_{\{\tau_1\le t\}}\big]
		= \sum_{j\ge1}\phi_j(x)\,\frac{\gamma_j}{\kappa}\,\lambda_j \int_0^t e^{-\lambda_j(t-s)}c(s)\,ds,
		\]
		which yields the second term in \eqref{eq:solution}. 
		
		We now explain why the two probabilistic representations coincide. 
		A possible killing time definition is 
		\[
		\xi:=\inf\{t>0:\;L^+_t>\chi\},
		\]
		where \(\chi\sim\mathrm{Exp}(\kappa)\) is independent of the reflected Brownian motion \(B^+\) with local time \(L^+_t\). Then the semigroup of the elastic (killed) Brownian motion can be written as
		\[
		P_t^e f(x)=\mathbf{E}_x\big[f(X^e_t)\mathbf{1}_{\{\xi>t\}}\big]
		=\mathbf{E}_x\big[e^{-\kappa L^+_t} f(B^+_t)\big],
		\]
		which coincides with the first term in \eqref{eq:stochastic_representation}. Taking \(f\equiv1\) yields the survival function \(P_t^e1(x)=\mathbf{E}_x[e^{-\kappa L^+_t}]\) and the law of the killing time satisfies, as measures,
		\[
		\mathbf{P}_x(\xi\in dT) \;=\; -\partial_T P_T^e1(x)\,dT
		\;=\; \mathbf{E}_x\!\big[\kappa e^{-\kappa L^+_T}\,dL^+_T\big].
		\]
		Equivalently,
		\[
		\mathbf{E}_x\!\big[e^{-\kappa L^+_T}\,dL^+_T\big] \;=\; \frac{\mathbf{P}_x(\xi\in dT)}{\kappa}.
		\]
		Convolving this identity with \(c\) and using \(\mathbf{E}_Z[f(X_{t-T})]=c(t-T)/\kappa\), as before, gives
		\[
		\int_0^t \mathbf{P}_x(\xi\in dT)\,\mathbf{E}_Z\big[f(X_{t-T})\big]
		= \mathbf{E}_x\!\Big[\int_0^t e^{-\kappa L^+_s}\,c(t-s)\,dL^+_s\Big],
		\]
		and therefore
		\[
		\mathbf{E}_x\big[f(X_t)\mathbf{1}_{\{\tau_1\le t\}}\big]
		= \mathbf{E}_x\!\Big[\int_0^t e^{-\kappa L^+_s}\,c(t-s)\,dL^+_s\Big],
		\]
		which shows that the two probabilistic representations coincide.
	\end{remark}
	\section{Trace process on the upper half-space}
	Let now focus on the upper half-space \(H=\mathbb{R}^{n-1}\times(0,\infty)\) and \(\mu\) a finite L\'evy measure on \(\mathbb{R}^+\)
	i.e.
	\[
	\int_0^\infty (1 \wedge z) \mu(dz) < \infty.
	\]
	with \(\mu[0,+\infty)=\kappa >0\). Our idea is to study the behavior of process \(X\) at the boundary, the trace it leaves on \(\partial H\) when coupled with an independent Brownian motion, and to understand in law how its local time evolves. In this context is useful to introduce the theory of subordinator, indeed we recall that given a Markov process, its local time (at a regular point) is the inverse of a subordinator \cite[Theorem 2.3]{BlumenthalGetoor}.
	
	Let \( H^\Phi = \{ H_t^\Phi : t \geq 0 \} \) be a subordinator. The subordinator \( H^\Phi \) can be characterized by its Laplace exponent \(\Phi\), which satisfies
	\begin{align}
		\label{LapH}
		\mathbf{E}_0[\exp(-\lambda H_t^\Phi)] = \exp(-t\Phi(\lambda)), \quad \lambda \geq 0.
	\end{align}
	The L\'evy-Khintchine representation for the Laplace exponent \(\Phi\) is given by (\cite[Theorem 3.2]{schilling2012bernstein})
	\begin{align}
		\label{LevKinFormula}
		\Phi(\lambda) = \lambda + \int_0^\infty (1 - e^{-\lambda z}) \mu(dz), \quad \lambda > 0,
	\end{align}
	with drift equals to \(1\) and \(\mu\) the corresponding L\'evy measure. The function \(\Phi\) is a Bernstein function, so such that
	\begin{align*}
		(-1)^{n-1} \Phi^{(n)}(\lambda) \geq 0 \quad \text{ for } n = 1, 2, \ldots
	\end{align*}
	uniquely associated with \(H^\Phi\) (\cite[Theorem 5.2]{schilling2012bernstein}).
	
	To have a complete treatment of trace processes, we need to introduce the following homogeneous Sobolev space \cite[Definition 1.31]{bahouri2011fourier}. Let $s \in \mathbb{R}$. The homogeneous Sobolev space $\dot{H}^s(\mathbb{R}^{n-1})$ is the space of tempered distributions $\varphi$ over $\mathbb{R}^{n-1}$, the Fourier transform of which belongs to $L^1_{\text{loc}}(\mathbb{R}^{n-1})$ and satisfies
	$$
	\|\varphi\|_{\dot{H}^s(\mathbb{R}^{n-1})}^2 := \int_{\mathbb{R}^{n-1}} |\xi|^{2s}|\hat{\varphi}(\xi)|^2 \, d\xi < \infty,
	$$
	where \(\hat{\varphi}(\xi)\) is the Fourier transform of \(\varphi\), i.e. \(\hat{\varphi}(\xi)=\int_{\mathbb{R}^{n-1}}\, e^{-i x \cdot \xi} \varphi(x) dx\). We know that for \(s < (n-1)/2\), $\dot{H}^s(\mathbb{R}^{n-1})$ is a Hilbert space \cite[Proposition 1.34]{bahouri2011fourier}.
	
	Following the approach of \cite[Chapter 3.10]{jacob2001pseudo}, we adopt a modified definition better suited for our specific context. Let \(H_\Phi\) be the space
	\[H_\Phi(\mathbb{R}^{n-1}) := \{\varphi \in L^2(\mathbb{R}^{n-1}): \, \Phi(| \xi |) \hat{\varphi}(\xi) \in L^2(\mathbb{R}^{n-1}) \},\]
	with the norm \(\| \varphi\|_{H_\Phi} = \|\Phi(\cdot) \hat{\varphi} \|_{L^2}\). Then, the following theorem holds.
	\begin{tm}
		Let \(H=\mathbb{R}^{n-1}\times(0,\infty)\) be the upper half-space and let
		\(\mu\) be a finite L\'evy measure on \([0,\infty)\), i.e. \(\mu\ge0\) and
		\(\kappa:=\mu([0,\infty))<\infty\),  associated to the symbol \(\Phi\) by \eqref{LevKinFormula}. 
		
		Let \(f\in H_\Phi(\mathbb{R}^{n-1})\cap\dot H^{-1/2}(\mathbb{R}^{n-1})\) and
		consider the solution \(u\) of the problem
		\begin{align*}
			\begin{cases}
				\Delta u(x,y)=0 \quad & \text{in } H\\
				u(x,0) =f(x)  \quad &\text{on } \partial H
			\end{cases}
		\end{align*}
		
		Define the nonlocal boundary operator \(K\)
		\begin{align}
			\label{DtN}
			Kf(x) := \partial_y u(x,0) + \int_0^\infty\bigl(u(x,z)-u(x,0)\bigr)\,\mu(dz).
		\end{align}
		
		Then in Fourier symbols, we write
		\[\widehat{K f}(\xi) \;=\; -\Phi(\vert \xi \vert)\widehat f(\xi),
		\]
		\(Kf \in L^2(\mathbb{R}^{n-1})\) and \( u \in L^2(H)\).
		
	\end{tm}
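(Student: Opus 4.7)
The plan is to work entirely in the Fourier transform in the horizontal variable \(x\in\mathbb{R}^{n-1}\), since the problem is translation invariant in \(x\) and the boundary operator \(K\) acts on functions of \(x\) only. First I would observe that the bounded harmonic extension of \(f\) to \(H\) is the Poisson extension; fixing \(\xi\) and applying the Fourier transform in \(x\) to the equation \(\Delta u = 0\) turns it into the ODE \(\partial_y^2 \widehat u(\xi,y) - |\xi|^2 \widehat u(\xi,y) = 0\) with \(\widehat u(\xi,0) = \widehat f(\xi)\), and the unique bounded solution is
\[
\widehat u(\xi,y) = \widehat f(\xi)\, e^{-|\xi| y}.
\]
From this explicit formula one reads off \(\partial_y u(x,0)\) in Fourier as \(-|\xi|\widehat f(\xi)\).

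Next I would treat the nonlocal piece. Since \(\mu\) is a finite measure and \(|e^{-|\xi| z} - 1| \le 1\), Fubini applies and swapping the Fourier transform with the \(\mu\)-integral gives
\[
\widehat{\int_0^\infty \bigl(u(\cdot,z) - u(\cdot,0)\bigr)\mu(dz)}\,(\xi)
= \widehat f(\xi) \int_0^\infty\bigl(e^{-|\xi| z} - 1\bigr)\mu(dz).
\]
Adding the two pieces and recognizing the Lévy--Khintchine representation \eqref{LevKinFormula} (with drift \(1\)) produces
\[
\widehat{Kf}(\xi) = -\widehat f(\xi)\Bigl[\,|\xi| + \int_0^\infty\bigl(1-e^{-|\xi|z}\bigr)\mu(dz)\,\Bigr]
= -\Phi(|\xi|)\,\widehat f(\xi),
\]
which is the claimed symbol identity. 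Then by Plancherel
\[
\|Kf\|_{L^2(\mathbb{R}^{n-1})}^2 = \|\Phi(|\cdot|)\,\widehat f\|_{L^2}^2 = \|f\|_{H_\Phi}^2 < \infty,
\]
so \(Kf\in L^2(\mathbb{R}^{n-1})\) by hypothesis on \(f\).

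For the last assertion \(u\in L^2(H)\) I would use Plancherel in \(x\) and then integrate the explicit profile in \(y\):
\[
\|u\|_{L^2(H)}^2
= \int_{\mathbb{R}^{n-1}}\!\!\int_0^\infty |\widehat u(\xi,y)|^2\,dy\,d\xi
= \int_{\mathbb{R}^{n-1}} |\widehat f(\xi)|^2 \int_0^\infty e^{-2|\xi|y}\,dy\,d\xi
= \tfrac12 \int_{\mathbb{R}^{n-1}} \frac{|\widehat f(\xi)|^2}{|\xi|}\,d\xi
= \tfrac12\,\|f\|_{\dot H^{-1/2}(\mathbb{R}^{n-1})}^2,
\]
which is finite by the assumption \(f\in \dot H^{-1/2}\). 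A preliminary density argument (e.g., approximating \(f\) by Schwartz functions, for which all Fourier manipulations and the pointwise identities are classical) makes the computations rigorous.

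The main obstacle, I expect, is not the algebra but the justifications around the harmonic extension: selecting the correct (bounded/decaying) solution of the ODE in Fourier so that \(u\) actually represents the Poisson extension in the distributional sense, verifying that \(\widehat u(\xi,y)=\widehat f(\xi)e^{-|\xi|y}\) is meaningful for \(f\) only in \(L^2\cap \dot H^{-1/2}\) without extra regularity, and controlling \(\partial_y u(x,0)\) and the \(\mu\)-integral simultaneously in \(L^2(\mathbb{R}^{n-1})\). These are handled by a standard approximation by Schwartz data, noting that once the Fourier identity \(\widehat{Kf}(\xi)=-\Phi(|\xi|)\widehat f(\xi)\) is established on a dense set, the \(H_\Phi\) bound extends it to the whole space by continuity, and the finiteness of \(\mu\) keeps the nonlocal integral harmless throughout.
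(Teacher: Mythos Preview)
Your proof is correct and follows essentially the same approach as the paper: Fourier transform in \(x\) to obtain the explicit Poisson extension \(\widehat u(\xi,y)=\widehat f(\xi)e^{-|\xi|y}\), read off the symbol of \(K\) by combining the normal derivative with the \(\mu\)-integral via the L\'evy--Khintchine formula, and conclude \(Kf\in L^2\) and \(u\in L^2(H)\) from the hypotheses \(f\in H_\Phi\) and \(f\in\dot H^{-1/2}\) respectively. Your added Fubini justification and density remark are fine refinements but not a departure from the paper's line of argument.
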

	
	\begin{proof}
		We take the Fourier transform of \(\Delta u=0\) in \(x\) only. For each fixed \(\xi\in\mathbb{R}^{n-1}\)
		the ODE in \(y\) is
		\[
		\partial_{yy}\widehat u(\xi,y) - |\xi|^2 \widehat u(\xi,y)=0.
		\]
		The bounded solution is
		\[
		\widehat u(\xi,y)=\widehat f(\xi)\,e^{-|\xi|y}.
		\]
		Hence the inward normal derivative at the boundary \(y=0\) is
		\[
		\widehat{\partial_y u}(\xi,0) = -|\xi|\,\widehat f(\xi).
		\]
		We compute the Fourier transform of the integral at the boundary:
		\[
		\mathcal F\!\Big(\int_0^\infty (u(\cdot,z)-u(\cdot,0))\,\mu(dz)\Big)(\xi)
		= \widehat f(\xi)\int_0^\infty\bigl(e^{-|\xi| z}-1\bigr)\,\mu(dz)
		\]
		
		By combining the two pieces and by using \eqref{LevKinFormula}, the Fourier transform of the boundary operator is
		\[
		\widehat{K f}(\xi)
		= \Bigl(-|\xi| - \int_0^\infty\bigl(1- e^{-|\xi| z}\bigr)\,\mu(dz)\Bigr)\widehat f(\xi)= -\Phi(\vert \xi \vert) \widehat f(\xi).
		\]
		and the first claim holds. Since \(f \in H_\Phi(\mathbb{R}^{n-1})\), then \(\widehat{K f} \in L^2(\mathbb{R}^{n-1})\) and also \(Kf \in L^2(\mathbb{R}^{n-1})\).
		
		We now prove that \(u \in L^2(H)\). By using Plancherel formula in \(x\) (for each fixed \(y\ge0\)) \cite[Theorem 1.25]{bahouri2011fourier}, the fact that \(\hat f \in L^2(\mathbb{R}^{n-1})\), since \(f \in L^2(\mathbb{R}^{n-1})\),
		and the explicit Fourier formula for \(\widehat u\), that is characterized by \(\widehat{f} \in L^2\), we see that
		\[
		\begin{aligned}
			\|u\|_{L^2(H)}^2
			&= \int_0^\infty \int_{\mathbb{R}^{n-1}} |u(x,y)|^2\,dx\,dy
			= \int_0^\infty \int_{\mathbb{R}^{n-1}} |\widehat u(\xi,y)|^2\,d\xi\,dy\\[4pt]
			&= \int_{\mathbb{R}^{n-1}} |\widehat f(\xi)|^2
			\Big(\int_0^\infty e^{-2|\xi|y}\,dy\Big)\,d\xi
			= \tfrac12 \int_{\mathbb{R}^{n-1}} \frac{|\widehat f(\xi)|^2}{|\xi|}\,d\xi.
		\end{aligned}
		\]
		By the assumption \(f\in\dot H^{-1/2}\) the right-hand side is finite, so
		\(u\in L^2(H)\).
	\end{proof}

	Probabilistically, on \(H\) we consider the process \((W,X)\), where \(W\) is a standard Brownian motion on \(\mathbb{R}^{n-1}\) independent of \(X\), the jump process defined by \eqref{eq:SDE} on \(D=[0,+\infty)\).
	
	We define the trace process \(T:=W\circ (L^\Psi)^{-1}\), where \(L^\Psi\) is the local time of \(X\) at zero and \((L^\Psi)^{-1}\) is its right inverse.  The inverse \((L^\Psi)^{-1}\) is the subordinator \(H^\Psi\). Therefore \(T\) is a subordinated Brownian motion, and its generator, when restricted to the domain of the Laplacian \(\mathcal{D}(\Delta)\), is given by the Phillips representation (Bochner subordination) \cite[Theorem 32.1]{sato}:
	\[
	-\,\Psi(-\Delta)\,v(x)
	\;=\;
	\int_0^\infty \big(S_z v(x)-v(x)\big)\,\Pi^\Psi(dz),
	\]
	where \(\Pi^\Psi(dz)\) is the L\'evy measure associated with \(\Psi\) and \(S\) is the semigroup of Brownian motion on \(\mathbb{R}^{n-1}\) with characteristic symbol \(\widehat S_z(\xi)=e^{-z|\xi|^2}\). Thus, taking the Fourier transform in the \(x\)-variable we obtain
	\begin{equation}\label{fourier-trace-psi}
		\int_{\mathbb{R}^{n-1}} e^{-i x\cdot\xi}\big(-\Psi(-\Delta)v(x)\big)\,dx
		\;=\;
		-\,\Psi(|\xi|^2)\,\widehat v(\xi).
	\end{equation}
	From the Fourier transform of the nonlocal Dirichlet-to-Neumann operator \(K\) (see \eqref{DtN}) we obtain the relation
	\begin{align}
		\label{PsiPhi}
		\Psi\big(|\xi|^2\big) \;=\; \Phi\big(|\xi|\big),
	\end{align}
	which is perfectly consistent with known results in the literature, let us see why.
	
	From the PDE perspective, on \([0,\infty)\) the heat equation with the integral boundary condition is
	\[
	\begin{cases}
		\partial_t u(t,x) = \partial_x^2 u(t,x), & x>0,\; t>0,\\[4pt]
		\displaystyle \partial_x u(t,0) + \int_0^\infty u(t,y)\,\mu(dy) = u(t,0), & t>0,\\[4pt]
		u(0,x)=f(x). &
	\end{cases}
	\]
	In this setting the probabilistic representation is, \cite[Section 15]{ito1963brownian},  \(u(t,x)=\mathbf{E}_x\big[f(B_t^\bullet)\big]\), where
	\[
	B_t^\bullet \;=\; A^\Phi_t + B^+_t,
	\qquad
	A^\Phi_t \;=\; H^\Phi\circ L^\Phi\circ \gamma_t - \gamma_t,
	\]
	and \(B^+\) is a reflecting Brownian motion on \([0,\infty)\), \(\gamma\) is its local time at zero, \(H^\Phi\) is the subordinator associated to \(\mu\) with \eqref{LevKinFormula} and \(L^\Phi\) is the right inverse of \(H^\Phi\), 
	\begin{align*}
		L_t^\Phi = \inf \{s > 0\,:\, H_s^\Phi >t \}, \quad t>0.
	\end{align*}
	Since we are considering the same semigroup (see Theorem \ref{tm:generator_feller}), we have
	\[
	X \stackrel{d}{=} B^\bullet.
	\]
	The dynamics are the following: \(B^\bullet\) has the sample paths of a reflecting Brownian motion on \([0,\infty)\); at zero it either reflects or has a jump, depending on whether the subordinator has already jumped. For more details see \cite[Section 12]{ito1963brownian}, \cite{bonaccorsi2022non}.
	
	Recently, in \cite{bonaccorsi2022non} it was observed that the boundary condition
	\[
	\partial_x u(t,0) + \int_0^\infty u(t,y)\,\mu(dy) = u(t,0)
	\]
	can be rewritten as
	\[
	\partial_x u(t,0) + \mathbf{D}_x^\mu u(t,0) = 0,
	\]
	where
	\[
	\mathbf{D}_x^\mu u(t,x) \;=\; \int_0^\infty \big(u(t,x+y) - u(t,x)\big)\,\mu(dy)
	\]
	is a Marchaud-type derivative. This formulation exactly reflects the behaviour of a boundary subordinator for \(B^\bullet\), since the adjoint of \(\mathbf{D}_x^\mu\) is the operator that appears in the governing equation of \(H^\Phi\). Moreover, the local time at zero of \(B^\bullet\) is given by \(L^\Phi \circ \gamma\), see \cite[Section 14]{ito1963brownian}, and this result confirms \eqref{PsiPhi}, since \(\gamma\) is the inverse of a subordinator with Laplace exponent \(\sqrt{\lambda}\), for \(\lambda>0\), then what we are considering is \(L^\Psi \stackrel{d}{=} L^\Phi \circ \gamma\), from which we see \(\Psi(\lambda)=\Phi(\sqrt{\lambda})\), and for the trace process, in \eqref{PsiPhi}, we have 
	\[\Psi(\vert \xi \vert ^2)=\Phi(\sqrt{\vert \xi \vert ^2})=\Phi(\vert \xi \vert ).\]
	
	\begin{figure}[h]
		\centering
		\begin{minipage}[b]{0.48\textwidth}
			\centering
			\includegraphics[width=\textwidth]{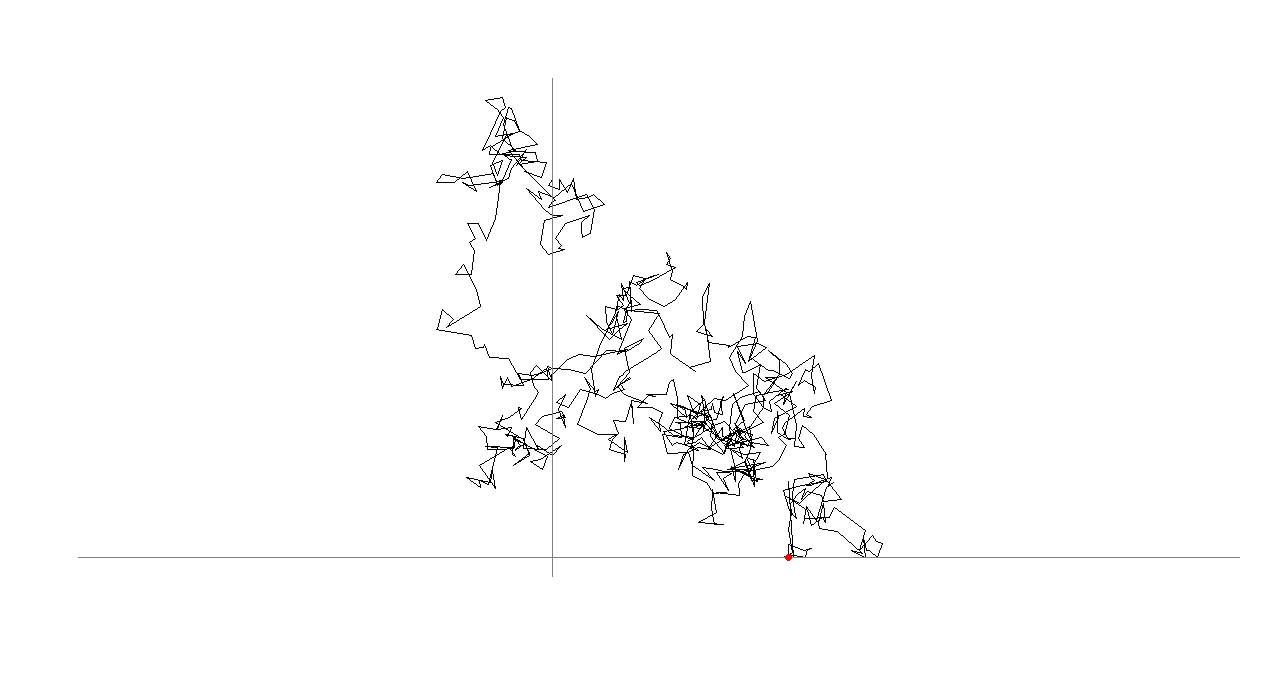}
		\end{minipage}\hfill
		\begin{minipage}[b]{0.48\textwidth}
			\centering
			\includegraphics[width=\textwidth]{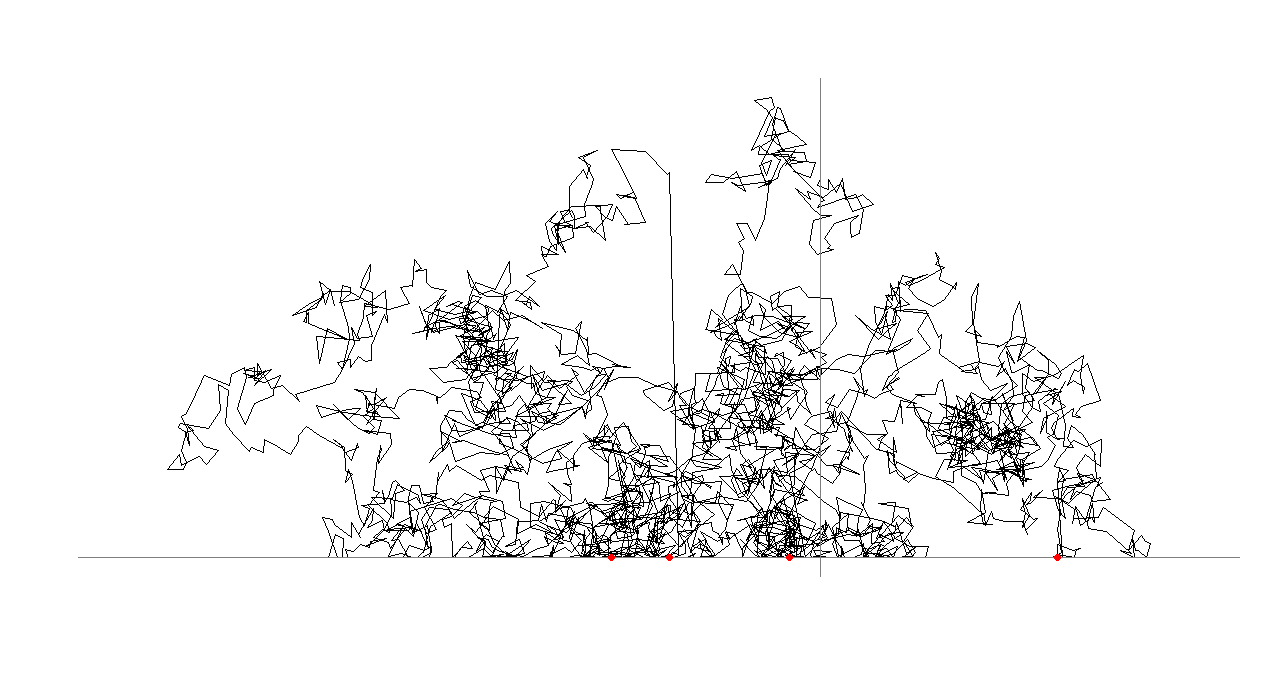}
		\end{minipage}
		\caption{Paths of the process \((W,X)\) on \(H\). Left: after the first jump; Right: after the fourth jump. Red points indicate the boundary position from which the jumps start.}
		\label{fig:trace}
	\end{figure}
	
	\section{Escaping narrow-neck traps by jumps}
	In this section we consider a planar narrow-neck domain and discuss a simple
	application of the process \(X\) introduced in \eqref{eq:SDE}. 
	
	Let \(\Omega_1,\Omega_2\subset\mathbb R^2\)
	be two disjoint bounded subdomains with \(C^2\) boundary, and connect them by a thin
	\(C^2\) tube (connector) whose characteristic neck width is \(\varepsilon>0\). We denote the
	connector by \(Q_\varepsilon\) and the resulting dumbbell domain by
	\[
	D_\varepsilon \;=\; \Omega_1\ \cup\ Q_\varepsilon\ \cup\ \Omega_2,
	\]
	constructed so that \(\partial D_\varepsilon\) is \(C^2\) for every fixed \(\varepsilon>0\).
	This construction is possible, see for instance
	in \cite{jimbo1992remarks} for smooth domains and \cite{bucur2021asymptotic} for related Lipschitz-type setups. See Figure \ref{fig:dumbbell} for an example.
	
	Fix a nonempty compact target \(\mathfrak B\subset\Omega_2\). Let \(B^+\) denote reflecting
	Brownian motion in \(D_\varepsilon\) and write
	\[
	T^+_{\mathfrak B,\varepsilon}:=\inf\{t\ge0:\; B^+_t\in\mathfrak B\}.
	\]
	Let \(X\) be the elastic-with-jumps process on \(D_\varepsilon\)  and set
	\[
	T_{\mathfrak B,\varepsilon}:=\inf\{t\ge0:\; X_t\in\mathfrak B\},
	\]
	while \(\tau_1\) denotes the first restart/jump time of \(X\).
	
	The narrow-escape literature shows that, for a broad class of smooth necks, the mean first
	passage time (MFPT) for standard reflecting Brownian motion to cross the neck and reach the
	other compartment diverges as the neck width tends to zero. In two dimensions the singularity
	of the Neumann function typically produces a leading logarithmic divergence: heuristically,
	the MFPT to escape from the neck scales like \(\log(1/\varepsilon)\) for \(\varepsilon\ll1\);
	see \cite[Formula (3.6)]{nep} for a concise review or \cite[Formula (2.20)]{singer2006narrow}. Thus, informally,
	\[
	\sup_{x\in\Omega_1}\mathbf{E}_x\big[T^+_{\mathfrak B,\varepsilon}\big]\to+\infty
	\qquad(\varepsilon\downarrow0),
	\]
	and one may regard the tube \(Q_\varepsilon\) as a \emph{local trap} in the narrow-neck regime.
	Although a globally \(C^2\) domain cannot be a trap in the strict sense of \cite{burdzy2006traps},
	it can contain smooth localized parts that produce arbitrarily large MFPTs as
	\(\varepsilon\downarrow0\). It is well known that introducing occasional resets (stochastic
	restarts) can significantly reduce MFPTs for diffusion processes; see, e.g., \cite{evans2011diffusion}.
	Below we explain why the elastic-with-jumps dynamics provides the same qualitative benefit
	when restarts occur from the boundary.
	
	Fix the family \(\{D_\varepsilon\}_{\varepsilon>0}\)
	just described and the target \(\mathfrak B\subset\Omega_2\). We suppose the restart mechanism
	satisfies the following conditions:
	\begin{itemize}
		\item[(A1)] There exists a compact set \(K\subset\Omega_2\) and a constant \(\alpha_0\in(0,1]\)
		such that \(\mu(K)=\alpha_0\) for all \(\varepsilon\) (in particular \(\mu\) may be taken
		independent of \(\varepsilon\));
		\item[(A2)] The first jumping time \(\tau_1\) has uniformly bounded expectation:
		\[
		S:=\sup_{\varepsilon>0}\sup_{x\in D_\varepsilon}\mathbf{E}_x[\tau_1] < +\infty;
		\]
		\item[(A3)] There exists \(R_0<\infty\) such that for every \(\varepsilon>0\)
		\[
		\sup_{y\in K}\mathbf{E}_y\big[T_{\mathfrak B,\varepsilon}\big]\le R_0.
		\]
	\end{itemize}
	
	Under (A1)-(A3) the elastic-with-jumps dynamics prevents
	asymptotic trapping in the narrow-neck limit: there is a constant \(C<\infty\), independent of
	\(\varepsilon\), such that
	\begin{align}
		\label{MFPT-jump}
		\sup_{x\in D_\varepsilon}\mathbf{E}_x\big[T_{\mathfrak B,\varepsilon}\big]\le C.
	\end{align}
	Thus restarting from the boundary (with law \(\mu\)) rules out the divergence of MFPT caused by
	the narrow neck, under the natural uniformity hypotheses above; this is useful for search
	problems in bottleneck geometries.
	
	The proof of \eqref{MFPT-jump} is a short renewal-type computation based on the
	decomposition at the first restart. Fix $\varepsilon>0$ and any starting point $x\in D_\varepsilon$. Write $T:=T_{\mathfrak B,\varepsilon}$.  
	By the renewal decomposition at the first jump time $\tau_1$ we have
	\[
	\mathbf{E}_x[T] = \mathbf{E}_x[T\wedge\tau_1] + \mathbf{E}_x\big[(T-\tau_1)\mathbf{1}_{\{T>\tau_1\}}\big].
	\]
	Since $T\wedge\tau_1\le \tau_1$, 
	\[
	\mathbf{E}_x[T] \le \mathbf{E}_x[\tau_1] + \mathbf{E}_x\big[(T-\tau_1)\mathbf{1}_{\{T>\tau_1\}}\big].
	\]
	Conditioning on the post-jump position $Z_1\sim\mu$ and applying the strong Markov property at $\tau_1$,
	\[
	\mathbf{E}_x\big[(T-\tau_1)\mathbf{1}_{\{T>\tau_1\}}\big]
	= \mathbf{E}_x\Big[ \mathbf{1}_{\{T>\tau_1\}}\cdot \mathbf{E}_{Z_1}[T]\Big]
	\le \mathbf{P}_x(T>\tau_1)\cdot \int_{D_\varepsilon}\mathbf{E}_y[T]\,\mu(dy).
	\]
	Set \(M:=\sup_{y\in D_\varepsilon}\mathbf{E}_y[T]\). Then trivially
	\[
	\int_{D_\varepsilon}\mathbf{E}_y[T]\,\mu(dy)
	\le \int_{K}\mathbf{E}_y[T]\,\mu(dy) + \int_{D_\varepsilon\setminus K}\mathbf{E}_y[T]\,\mu(dy)
	\le \mu(K) \sup_{y\in K}\mathbf{E}_y[T] + (1-\mu(K)) M.
	\]
	Using (A1)--(A3) we get \(\mu(K)=\alpha_0\) and \(\sup_{y\in K}\mathbf{E}_y[T]\le R_0\), hence
	\[
	\int_{D_\varepsilon}\mathbf{E}_y[T]\,\mu(dy) \le \alpha_0 R_0 + (1-\alpha_0) M.
	\]
	Therefore for every $x\in D_\varepsilon$,
	\[
	\mathbf{E}_x[T] \le \mathbf{E}_x[\tau_1] + \mathbf{P}_x(T>\tau_1)\big(\alpha_0 R_0 + (1-\alpha_0) M\big)
	\le \mathbf{E}_x[\tau_1] + \alpha_0 R_0 + (1-\alpha_0) M,
	\]
	since $\mathbf{P}_x(T>\tau_1)\le 1$. Taking the supremum over $x\in D_\varepsilon$ yields
	\[
	M \le S + \alpha_0 R_0 + (1-\alpha_0) M,
	\]
	where we used (A2) to bound $\sup_x\mathbf{E}_x[\tau_1]\le S$. Rearranging gives
	\[
	\alpha_0 M \le S + \alpha_0 R_0,
	\]
	hence
	\[
	M \le \frac{S}{\alpha_0} + R_0.
	\]
	which is the desired uniform bound.
	
	\begin{figure}[h]
		\centering
		\includegraphics[width=10.5cm]{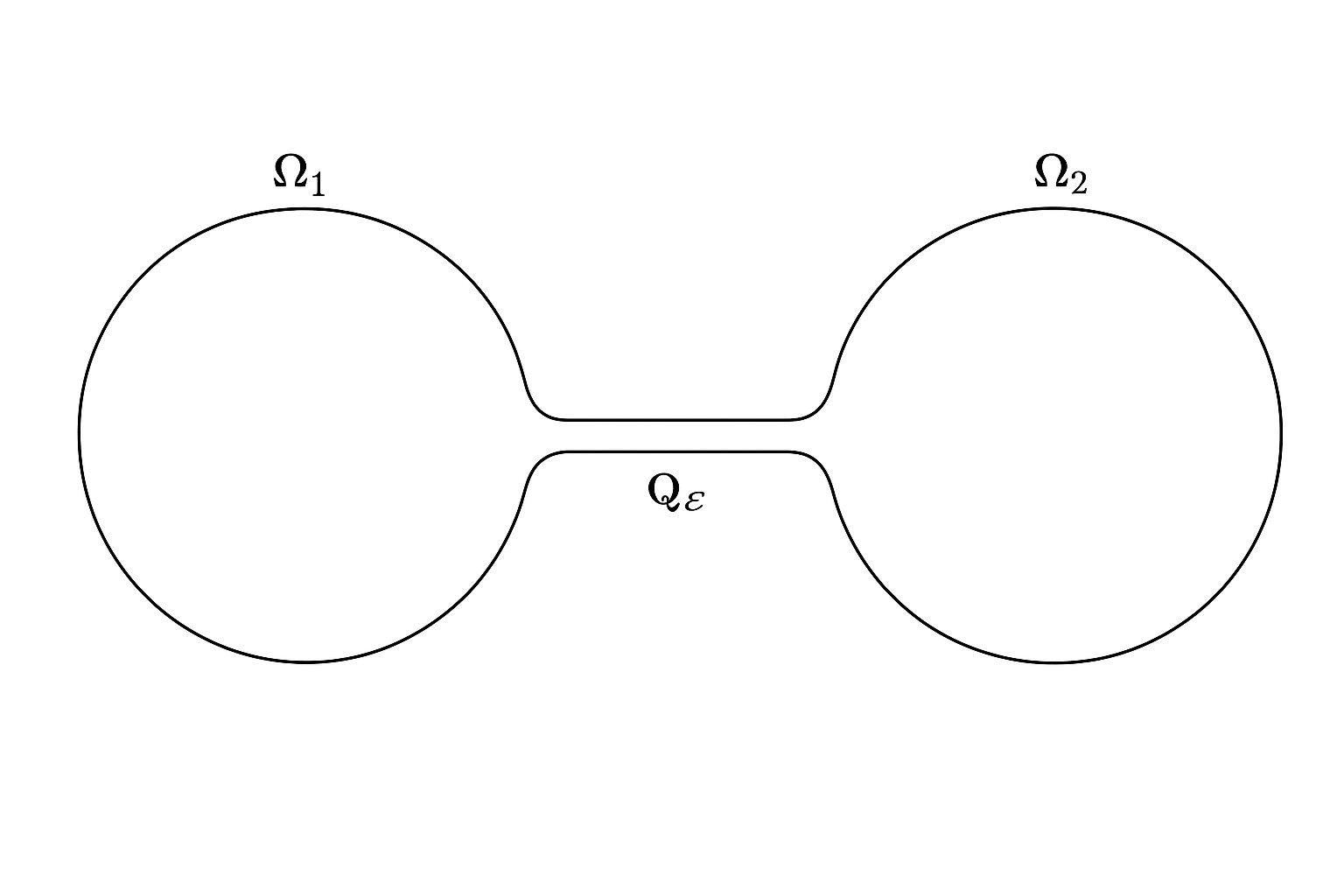} 
		\caption{A possible dumbbell domain.}
		\label{fig:dumbbell}
	\end{figure}

	\section*{Acknowledgments}
	The authors thank the Sapienza University of Rome and the group INdAM-GNAMPA for the support under their Grants.\\
	The research has been mostly funded by MUR under the project PRIN 2022 - 2022XZSAFN - CUP B53D23009540006 - PNRR M4.C2.1.1.: Anomalous Phenomena on Regular and Irregular Domains:
	Approximating Complexity for the Applied Sciences. \\
	Web Site: \url{https://www.sbai.uniroma1.it/~mirko.dovidio/prinSite/index.html}.\\
	F.C. warmly thanks Marco Michetti for the enjoyable discussion on Robin spectral theory.

\end{document}